\def\lab{\label}
\numberwithin{equation}{section}
\newcommand{\Gr}{\operatorname{Gr}}
\newcommand{\CC}{\mathbb{C}}
\newcommand{\LL}{\mathbb{L}}
\newcommand{\N}{\mathbb{N}}
\newcommand{\ZZ}{\mathbb{Z}}
\newcommand{\bn}{\mathbf{n}}
\newcommand{\bt}{\mathbf{t}}
\def\bd{{\bf d}}
\def\mat{{\rm Mat}}
\def\Rep{{\rm Rep}}
\def\sO{{\mathscr O}}
\def\2M{M}
\newcommand{\GL}{{\rm GL}}
\def\lra{\longrightarrow}
\newcommand{\Ga}{\Gamma}
\newcommand{\ee}{{\bf e}}
\def\begeq{\begin{equation}}
\def\endeq{\end{equation}}
\def\and{\quad{\rm and}\quad}
\def\br{\bigr)}
\def\and{\quad\text{and}\quad}
  \DeclareMathOperator{\Hom}{Hom}
\let\lab=\label
\newtheorem{prop}{Proposition}[section]
\newtheorem{theo}[prop]{Theorem}
\newtheorem{lemm}[prop]{Lemma}
\newtheorem{coro}[prop]{Corollary}
\newtheorem{defi}[prop]{Definition}
\theoremstyle{definition}
\newtheorem{say}[prop]{}
\newtheorem{rema}[prop]{Remark}
\def\bt{{\bf t}}
\let\lab=\label
\def\lab#1{\label{#1}[{#1}]\  }
\def\lab{\label} %{{\bf index}-}%=\label
\def\beq{\begin{equation}}
\def\eeq{\end{equation}}
\def\br{{\bf r}}
\def\bn{{\bf n}}
\def\bh{{\bf h}}
\def\bt{{\bf t}}
\begin{document}

\title[Quivers, Invariants and GM Correspondence]{Quivers, Invariants and Quotient
Correspondence}

\author{Yi Hu}
\address{ Department of Mathematics\\
University of Arizona, Tucson, USA}
%617 N. Santa Rita Ave. \\
%Tucson, AZ 85721 USA}
\email{yhu@math.arizona.edu}

\author{Sangjib Kim}
\address{Department of Mathematics\\
Ewha Womans University, Seoul, South Korea}
\email{sangjib@gmail.com}

\thanks{The first author was partially supported by NSA grant MSP07G-112  and NSF DMS 0901136.}

\begin{abstract}
This paper studies the geometric and algebraic aspects of the moduli spaces of quivers of fence type.
We first provide two  quotient presentations of the quiver varieties and interpret their equivalence
as a generalized  Gelfand-MacPherson correspondence. Next, we introduce parabolic quivers and extend 
the above from the actions of reductive groups to the actions of parabolic subgroups.  Interestingly, 
the above geometry  finds its  natural counterparts  in the representation theory as the branching rules 
and transfer principle in the context of  the reciprocity algebra.  The last half of the paper 
establishes this connection.
\end{abstract}

\maketitle

\bigskip
%%%%%%%%%%%%%%%%%%%%%%%%%%%%%%%%%%%%%%%%%%%%%%%%%%%%%%%%%%%%%%%%%%%%%%%%%%%%%%%%
%%%%%%%%%%%%%%%%%%%%%%%%%%%%%%%%%%%%%%%%%%%%%%%%%%%%%%%%%%%%%%%%%%%%%%%%%%%%%%%%
\section{Introduction}
%%%%%%%%%%%%%%%%%%%%%%%%%%%%%%%%%%%%%%%%%%%%%%%%%%%%%%%%%%%%%%%%%%%%%%%%%%%%%%%%
%%%%%%%%%%%%%%%%%%%%%%%%%%%%%%%%%%%%%%%%%%%%%%%%%%%%%%%%%%%%%%%%%%%%%%%%%%%%%%%%
\medskip

The usual Gelfand-MacPherson correspondence (\cite{GM82}),
as formulated by Kapranov (\cite{Ka93}),  establishes a natural correspondence
between GIT quotients of  $\left( \mathbb{P}^{n-1}\right)^{k}$  by the diagonal action 
of $\GL_{n}(\mathbb{C)}$ and GIT quotients of the Grassmannian variety $\Gr(n,\mathbb{C}^{k})$ 
by the maximal torus $(\mathbb{C}^{\ast })^{k}$.

In this paper, we generalize the above by providing two versions of  the  quotient correspondence for  moduli spaces of
quivers of fence type (\S\S \ref{reductive}, \ref{parabolic}). 
A quiver is an oriented graph $Q=(Q_0, Q_1,  \bh, \bt)$ equipped with a finite ordered set of vertices 
$Q_0$, a set of arrows $Q_1$, and two functions $\bh, \bt$ such that for each arrow 
$a \in Q_1$, $\bh(a) \in Q_0$ is the head and  $\bt(a) \in Q_0$ is the tail. 
It is of fence type if the vertex set $Q_0$ can be decomposed as the disjoint union 
of subsets $H$ and $T$ such that $H$ consists of only heads of arrows and $T$ consists of only tails.
%A quiver of fence type is a quiver $Q=(Q_0,Q_1, h, t)$ whose vertex set $Q_0$ can be decomposed
%as the disjoint union of subsets $H$ and $T$ such that $H$ consists of only heads of arrows
%and $T$ consists of only tails. Here $Q_1$ is the set of arrows, and $h$ and $t$ are functions  
%from $Q_1$ to $H$ and $T$ respectively.
Associated to such a quiver  are products of  general linear groups
$$G_H= \prod_{h \in H} \GL_{d_h} \and G_T= \prod_{t \in T} \GL_{d_t} $$
where  $\bd=(d_q)_{q \in Q_0}$ is a fixed  dimension vector. If for any $h \in H$ and $t \in T$, we set
$$n_h = \sum_{a \in Q_1, \bh(a)=h} d_{\bt(a)} \and n_t=\sum_{a \in Q_1, \bt(a)=t} d_{\bh(a)},$$
then, we can associate to the quiver the following  products of Grassmannians
$$X_T = \prod_{t \in T} \Gr(d_t, \CC^{n_t}) \and X_H= \prod_{h \in H} \Gr(d_h, \CC^{n_h}).$$
The group $G_H$ acts on $X_T$ naturally; the group $G_T$ acts on $X_H$ naturally.
For any ${\bf e}=(e_t)_{t \in T} \in \N^{|T|}$ and  ${\bf r}=(r_h)_{h \in H} \in \N^{|H|}$, we have an ample line bundle
$$L_\ee =\boxtimes_{t \in T} \sO_{ \Gr(d_t, \CC^{n_t})} (e_t)$$
over $X_T$ determined by $\ee$.
The tuple ${\bf r}$ defines a character  of $G_H$: $$\chi_\br: G_H \to \CC^*$$  (see \eqref {characters}) and thus induces
 a $G_H$-linearization $L_\ee(\br)$ over $X_T$.  Similarly but with the roles of $\br$ and $\ee$ swapped,
we have an ample line bundle over $X_H$ $$L_\br =\boxtimes_{h \in H} \sO_{ \Gr(d_h, \CC^{n_h})} (r_h),$$
a $G_T$-character $\chi_\ee: G_T \to \CC^*$ and the induced
 $G_T$-linearization $L_\br(\ee)$ over $X_H$.

%%%%%%%%%%%%%%%%%%%%%%%%%%%%%%%%%%%%%%%%%%%%%%%%%%%%%%%%%%%%%%%%%%%%%%%%%%%%%%%%
\begin{theo}\lab{reductiveGM2} 
There is a natural one-to-one correspondence between the set of GIT quotients of $X_T$ by $G_H$
and the set of GIT quotients of $X_H$ by $G_T$. Precisely,
suppose that $\br \in \N^{|H|}$ and $\ee \in \N^{|T|}$ satisfy the compatibility condition \eqref{rd=ed},
then we have a natural isomorphism between $X_T^{ss}(L_\ee(\br))/\!/G_H$ and
$X_H^{ss}(L_\br(\ee))/\!/G_T$. 
\end{theo}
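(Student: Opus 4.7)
The plan is to realize both GIT quotients as iterated reductions of the common parent space $W = \Rep(Q,\bd)$ and to invoke the commutation of GIT for the commuting actions of $G_H$ and $G_T$ on $W$. Because $Q$ is of fence type, $W$ admits the dual decompositions
$$W \;=\; \bigoplus_{h\in H}\Hom(\CC^{n_h},\CC^{d_h}) \;=\; \bigoplus_{t\in T}\Hom(\CC^{d_t},\CC^{n_t}),$$
and the character $\chi_{\br}\chi_{-\ee}$ on $\GL_\bd$ descends to $\GL_\bd'$ precisely under the compatibility hypothesis \eqref{rd=ed}, which is exactly what makes the linearization $\LL(\chi_{\br}\chi_{-\ee})$ on the trivial bundle over $W$ a well-defined $\GL_\bd'$-linearization.

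First I would carry out the one-sided identifications in the spirit of the Gelfand--MacPherson/Kapranov presentation of Grassmannians as GIT quotients. Restricting the full linearization to $G_H$ gives the character $\chi_{\br}$ with positive exponents $r_h$; consequently, inside each summand $\Hom(\CC^{n_h},\CC^{d_h})$ the semistable locus is the locus of surjective maps, and one obtains a natural isomorphism
$$W^{ss}_{\chi_{\br}}/\!/G_H \;\cong\; X_H,$$
together with the identification of the induced polarization as $L_\br=\boxtimes_h \sO_{\Gr(d_h,\CC^{n_h})}(r_h)$. The commuting $G_T$-action preserves this semistable locus since it acts through the block decomposition $\CC^{n_h}=\bigoplus_{a:h(a)=h}\CC^{d_{t(a)}}$, and descends to the standard $G_T$-action on $X_H$; tracked through the inverse convention in the right action of $G_T$, the residual character $\chi_{-\ee}$ becomes exactly the $G_T$-linearization defining $L_\br(\ee)$. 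A symmetric argument using the $T$-decomposition of $W$ identifies $W^{ss}_{\chi_{-\ee}}/\!/G_T$ with $X_T$, carrying the $G_H$-linearization $L_\ee(\br)$.

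Next I would apply reduction in stages for the commuting reductive actions of $G_H$ and $G_T$ on $W$ with the full linearization $\LL(\chi_{\br}\chi_{-\ee})$. Using that its restrictions to the two factors are $\chi_{\br}$ and $\chi_{-\ee}$ respectively, one obtains two expressions for the same total GIT quotient:
$$\bigl(W^{ss}/\!/G_H\bigr)/\!/G_T \;=\; W^{ss}/\!/(G_H\times G_T) \;=\; \bigl(W^{ss}/\!/G_T\bigr)/\!/G_H.$$
By the identifications above, the outer quotients are $X_H^{ss}(L_\br(\ee))/\!/G_T$ and $X_T^{ss}(L_\ee(\br))/\!/G_H$, yielding the desired natural isomorphism and, by varying $(\br,\ee)$, the asserted bijection between the two sets of GIT quotients.

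The main technical obstacle is verifying that the semistable loci are compatible with the staged quotients, i.e., that $W^{ss}$ for the $\GL_\bd'$-linearization is the preimage of $X_H^{ss}(L_\br(\ee))$ under $W^{ss}_{\chi_{\br}}\to X_H$, and likewise on the $T$-side. This is the standard compatibility of GIT under commuting reductive actions; concretely it reduces to identifying the bigraded invariant section ring $\bigoplus_{n}H^0\bigl(W,\LL(\chi_{\br}\chi_{-\ee})^{\otimes n}\bigr)^{\GL_\bd}$ as invariants computed from either intermediate quotient, following the strategy of Theorem~4.2 of \cite{Hu05} and \S 2.2 of \cite{HMSV06}. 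Once this comparison of invariant rings is in place, bookkeeping the sign conventions caused by the inverse in the tail action of $G_T$ is the only remaining subtlety.
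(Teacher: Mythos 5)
Your proposal is correct and follows essentially the same route as the paper: both realize the two quotients as stages of the quotient of $\Rep(Q,\bd)$ by $\GL_\bd$ under the linearization $\LL(\chi_{\br}\chi_{-\ee})$, and both reduce the comparison to an equality of invariant section rings obtained by applying the first fundamental theorem factor-by-factor (this is precisely the Gelfand--MacPherson presentation of each Grassmannian that you invoke). The only difference is packaging: the paper never makes the intermediate semistable loci explicit but passes directly to $\Ga(X_T,L_\ee^N)^{G_H}=\Ga(\Rep(Q,\bd),\LL^{(\sum e_t d_t N)})^{\GL_\bd}=\Ga(X_H,L_\br^N)^{G_T}$ and takes Proj, so the ``main technical obstacle'' you flag (compatibility of semistable loci with the staged quotients) is exactly the step the paper discharges by this ring computation, with the condition \eqref{rd=ed} ensuring the two gradings line up.
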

%%%%%%%%%%%%%%%%%%%%%%%%%%%%%%%%%%%%%%%%%%%%%%%%%%%%%%%%%%%%%%%%%%%%%%%%%%%%%%%%

As a special case, when the quiver is a star quiver, that is, it has a unique head ($H$ consists of a single element),
we recover the quotient correspondence of \cite{Hu05}  (of which the usual GM correspondence is a special case).

\medskip

The above are quotient correspondences for reductive group actions. In  some practice, one may encounter
quotients by parabolic groups which often requires special treatments as there is no general quotient theory for
non-reductive groups. In \S \ref{parabolic}, we consider the parabolic subgroup actions on the representation space
of the quiver.  It turns out their quotients  parameterize what we call $``$parabolic quivers$"$:
a parabolic quiver  is a representation of the quiver $Q$ together with some (partial) flags of $V_b$
at every vertex $b \in Q_0$.    To specify the flags,  for any vertex $v \in Q_0$,
  we fix a partition $$d_v= d_{v_1} + \cdots +d_{v_s}$$ of $d_v$ by positive integers
  where $s=s(v)$ is a positive integer depending on the vertex $v$.
Associated to each $v \in Q_0$, we have  the following (partial) flag variety
\begin{equation*}
Y_v:=\left\{ \left( 0\subset V_{1}\subset
\cdots \subset V_{s}\subset \mathbb{C}^{n_v}\right) :\dim
V_{i}=\sum_{j=1}^{i}d_{v_j}\right\}
\end{equation*}
where $n_v$ is as define earlier (see also \eqref{nv}).  We set
$$Y_T= \prod_{t \in T} Y_t \and Y_H = \prod_{h \in H} Y_h.$$
We also let
$$P_H= \prod_{h \in H} P_h \and P_T = \prod_{t \in T} P_t$$
where   $P_v$ as the parabolic subgroup of $\GL_{d_v}$  as defined in \eqref{Parabolic}.
Then $P_H$ acts on $Y_T$ naturally and $P_T$ acts on $Y_H$ naturally.

%%%%%%%%%%%%%%%%%%%%%%%%%%%%%%%%%%%%%%%%%%%%%%%%%%%%%%%%%%%%%%%%%%%%%%%%%%%%%%%%
\begin{theo}[{\protect = Theorem \ref{parabolicGM}}]
There is an one-to-one correspondence between the set of GIT quotients of
$Y_T$ by $P_H$ and the set of GIT quotients of $Y_H$ by $P_T$. 
\end{theo}
%%%%%%%%%%%%%%%%%%%%%%%%%%%%%%%%%%%%%%%%%%%%%%%%%%%%%%%%%%%%%%%%%%%%%%%%%%%%%%%%

This extends the quotient correspondence from the general linear groups to 
parabolic subgroups. For the details, see \S \ref{parabolic}.

Our approaches to the above two  geometric results  are similar to the ones used in 
Theorem 4.2 of \cite{Hu05} and also in \S 2.2 of \cite{HMSV06}. For GIT quotients by parabolic 
subgroups, we apply the corresponding results of \cite{Hu06}.

\medskip

In the second half of this paper, we turn our attention to the algebraic aspects of the 
above geometric results. Interestingly, our geometric correspondence finds its natural 
counterpart in representation theory in the context of the {\it reciprocity algebra} studied by
Howe and his collaborators \cite{HL07, HTW08}. For this,  we construct in \S \ref{reciprocity}
an algebra whose homogeneous components provide invariant section spaces as arising in the 
parabolic quotient correspondences. Then we show that each homogeneous component of this 
algebra records two different types of {\it branching rules} for the representations of the 
general linear groups. This is the algebraic version of the geometric quotient correspondence 
stated in the second theorem.

To be more precise, let $P'_H$ and $P'_T$ be the commutator subgroups of $P_H$ and $P_T$ 
respectively. Then we show that 

%%%%%%%%%%%%%%%%%%%%%%%%%%%%%%%%%%%%%%%%%%%%%%%%%%%%%%%%%%%%%%%%%%%%%%%%%%%%%%%%
\begin{theo} [{\protect = Corollary \ref{individual reciprocity}}]
The $P'_H \times P'_T$-invariant subring of the coordinate ring
of the space $\Rep(Q,\bd)$ is a multi-graded ring whose
homogeneous components encode simultaneously
\begin{enumerate}
\item the multiplicities of the $\prod_{h}\GL_{d_{h}}$ modules $\bigotimes_{h}V_h$ \\
in the $\prod_{h,i}\GL_{d_{h}}$ modules $\bigotimes_{h,i} {W_{h,i}}$ and;
\item the multiplicities of the $\prod_{h,i}\GL_{d_{t_{h,i}}}$ modules
$\bigotimes_{h,i}W'_{h,i}$\\
in the $\prod_{h}\GL_{n_{h}}$ modules $\bigotimes_{h}V'_{h}$
\end{enumerate}
where $h \in H$ and $1 \leq i \leq m(h)$. Moreover, $V_h$ and $V'_h$ 
(respectively $W_{h,i}$ and $W'_{h,i}$) are labeled by the same Young diagrams.
\end{theo}
%%%%%%%%%%%%%%%%%%%%%%%%%%%%%%%%%%%%%%%%%%%%%%%%%%%%%%%%%%%%%%%%%%%%%%%%%%%%%%%%

In \S \ref{transfer}, we present the parabolic quotient correspondence stated in the second theorem
as a geometric version of the {\it transfer principle} in the representation theory.

%As it takes  a lot more ad hoc  notations to introduce this part of our results,  we skip %further introduction and
%refer  the reader directly to the details in \S\S 4 and 5.

%Paragraphs are enumerated, so are equations. For instance, 3.2
%refers to Paragraph 3.2, while (3.2) refers to Equation (3.2). \S
%3.2 refers to a subsection.

\bigskip

%%%%%%%%%%%%%%%%%%%%%%%%%%%%%%%%%%%%%%%%%%%%%%%%%%%%%%%%%%%%%%%%%%%%%%%%%%%%%%%%
%%%%%%%%%%%%%%%%%%%%%%%%%%%%%%%%%%%%%%%%%%%%%%%%%%%%%%%%%%%%%%%%%%%%%%%%%%%%%%%%

\section{Quivers and reductive quotient correspondence}\lab{reductive}

%%%%%%%%%%%%%%%%%%%%%%%%%%%%%%%%%%%%%%%%%%%%%%%%%%%%%%%%%%%%%%%%%%%%%%%%%%%%%%%%
%%%%%%%%%%%%%%%%%%%%%%%%%%%%%%%%%%%%%%%%%%%%%%%%%%%%%%%%%%%%%%%%%%%%%%%%%%%%%%%%

%In this section, we introduce the generalized Gelfand-MacPherson
%correspondence for actions of the general linear groups on quivers.

\begin{say} As in the introduction, 
a quiver is an oriented graph $Q=(Q_0, Q_1, \bh, \bt)$ equipped with a finite ordered set 
of vertices $Q_0$, a set of arrows $Q_1$, and two functions $\bh, \bt$ such that for each arrow 
$a \in Q_1$, $\bh(a) \in Q_0$ is the head and  $\bt(a) \in Q_0$ is the tail. If $Q_0$ contains 
$m>0$ vertices, we may identify $Q_0$ with the set of integers $\{1, \cdots, m\}$.
\end{say}

\begin{say}\lab{Quiver}
Fix  a vector $\bd=(d_1,\cdots, d_m) \in {\mathbb N}^m$. The representation spaces  of the quiver $Q$ with 
the fixed dimension vector $\bd$ can be, upon choosing bases of relevant vector spaces, identified with
$$\Rep (Q, \bd) = \bigoplus_{a \in Q_1} \mat_{d_{\bh(a)}, d_{\bt(a)}}$$
where $\mat_{p,q}$ is the space of matrices of size $p \times q$.
Let
$$\GL_\bd = \prod_{i \in Q_0}  \GL_{d_i}.$$
Then the reductive group $\GL_\bd $ acts on $\Rep (Q, \bd)$ by conjugation
\beq \lab{action} 
(g_1, \cdots, g_m): (M_a)_{a \in Q_1} \to (g_{\bh(a)} \cdot M_a \cdot  g^{-1}_{\bt(a)}).
\eeq
\end{say}

%Next, we treat a more general class of quivers.

\begin{defi} \lab{fence} 
 A quiver $Q=(Q_0,Q_1, \bh, \bt)$ is of fence type if there is a disjoint union 
$Q_0=H \sqcup T$ such that $H$ consists of vertices that are heads of arrows and $T$ consists of 
vertices that are tails of arrows. In particular, there are no arrows between any two vertices in 
$H$ {\rm (}$T$, respectively{\rm )}.
\end{defi}

\begin{say}
The reductive group $\GL_\bd = G_H \times G_T$ acts on the affine space $\Rep (Q, \bd)$.
Let $\LL$ be the trivial line bundle $\Rep (Q, \bd) \times \CC$. A character
$$\chi: \GL_\bd \lra \CC^*$$
defines a linearization of $\GL_\bd$ on $\LL$ by
$$g \cdot (M,z) = (g \cdot M, \chi(g) z).$$

We can identify the center of $G_H$ ($G_T$) with $(\CC^*)^{|H|}$ ($(\CC^*)^{|T|}$).
Any character of $G_H$ , respectively $G_T$,  is of the form
\beq\lab{characters} 
\chi_\br ((g_h)_{h \in H} )= \prod_{h \in H} \det (g_h)^{r_h} \;\;
\hbox{respectively,} \; \; \chi_\ee ( (g_t)_{t \in T} )= \prod_{t \in T} \det (g_t)^{e_t}  
\eeq
where $\br=(r_h)_{h \in H} \in \N^{|H|}$ and $\ee=(e_t)_{t \in T} \in \N^{|T|}$. The product
$\chi_{\br} \chi_{-\ee}$ defines a character of $\GL_\bd=G_H \times G_T$ and any character of 
$\GL_\bd$ is of this form. We let $\LL(\chi_{\br} \chi_{-\ee})$ be the associated linearized line 
bundle. We introduce 
$$K =\left\{(z_vI_{d_v})_{v \in Q_0} \in  \GL_\bd\; | \;z_v \in \CC^*, \; z_{\bh(a)}=z_{\bt(a)}, \; \forall a \in Q_1\right\}.$$
Then the subgroup $K$ acts trivially on $\Rep(Q,\bd)$. We let $\GL_\bd'= \GL_\bd/K$ and consider 
the action of this quotient group. The character $\chi_{\br} \chi_{-\ee}$ descends to a character of 
$\GL'_\bd$ if and only if
\beq \lab{rd=ed}
\sum_{h \in H}  r_h \cdot  d_h = \sum_{t \in T} e_t \cdot d_t.
\eeq
\end{say}

\begin{say}  
For any $h \in H$ and $t \in T$, we set
\beq\lab{nv} n_h = \sum_{a \in Q_1, \bh(a)=h} d_{\bt(a)} \and n_t=\sum_{a \in Q_1, \bt(a)=t} d_{\bh(a)}.\eeq
We also let
$$X_T = \prod_{t \in T} \Gr(d_t, \CC^{n_t}) \and X_H= \prod_{h \in H} \Gr(d_h, \CC^{n_h}).$$
The group $G_H$ acts on $X_T$ naturally; the group $G_T$ acts on $X_H$ naturally.
We consider the action of $G_H$ on $X_T$ first. We let
$$L_\ee =\boxtimes_{t \in T} \sO_{ \Gr(d_t, \CC^{n_t})} (e_t)$$
be the line bundle over $X_T$ determined by $\ee$. Then the character $\chi_\br: G_H \to \CC^*$  of 
\eqref {characters} defines a $G_H$-linearization $L_\ee(\br)$ over $X_T$.  Similarly,
we let $$L_\br =\boxtimes_{h \in H} \sO_{ \Gr(d_h, \CC^{n_h})} (r_h)$$ be the line bundle over 
$X_H$ determined by $\br$. Then the character $\chi_\ee: G_T \to \CC^*$ of \eqref {characters}
defines a $G_T$-linearization $L_\br(\ee)$ over $X_H$.  
\end{say}

\begin{say} Now we prove Theorem \ref{reductiveGM2}.
%%%%%%%%%%%%%%%%%%%%%%%%%%%%%%%%%%%%%%%%%%%%%%%%%%%%%%%%%%%%%%%%%%%%%%%%%%%%%%%%
% \begin{theo}\lab{reductiveGM2} 
% There is a natural one-to-one correspondence between the set of GIT quotients of $X_T$ by $G_H$
% and the set of GIT quotients of $X_H$ by $G_T$. Precisely,
% suppose that $\br \in \N^{|H|}$ and $\ee \in \N^{|T|}$ satisfy the compatibility condition \eqref{rd=ed},
% then we have a natural isomorphism between $X_T^{ss}(L_\ee(\br))/\!/G_H$ and
% $X_H^{ss}(L_\br(\ee))/\!/G_T$. 
% \end{theo}
%%%%%%%%%%%%%%%%%%%%%%%%%%%%%%%%%%%%%%%%%%%%%%%%%%%%%%%%%%%%%%%%%%%%%%%%%%%%%%%%
\begin{proof} First, we can write
$$\Rep(Q,\bd)= \bigoplus_{t \in T} \bigoplus_{a \in Q_1, \bt(a)=t} \mat_{d_{\bh(a),d_t}}.$$
We identify  $\bigoplus_{a \in Q_1, \bt(a)=t} \mat_{d_{\bh(a),d_t}}$ with
$\mat_{n_t,d_t}$ by placing individual matrices of $\mat_{d_{\bh(a),d_t}}$ in rows.
For simplicity, we shall write $\LL$ for $\LL(\chi_{\br} \chi_{-\ee})$.
Then by applying the first fundamental theorem of invariant theory to every individual factor
$\Gr(d_t, \CC^{n_t})$ of $X_T$,  we have that for any $N \ge 0$,
$$\Ga(X_T, L_\ee^N)= \Ga(\Rep(Q,\bd), \LL^{(\sum e_t d_t N)})^{G_T} .$$
Consequently, we have
$$\Ga(X_T, L_\ee^N)^{G_H}= \Ga(\Rep(Q,\bd), \LL^{(\sum e_t d_t N)})^{\GL_\bd} .$$
Likewise, we can also write
$$\Rep(Q,\bd)= \bigoplus_{h \in H} \bigoplus_{a \in Q_1, \bh(a)=h} \mat_{d_h, d_{\bt(a)}}.$$
We identify $\bigoplus_{a \in Q_1, \bh(a)=h} \mat_{d_h, d_{\bt(a)}}$ with $\mat_{d_h, n_h}$ 
by placing matrices of $\mat_{d_h, d_{\bt(a)}}$ in different columns. Then by the first 
fundamental theorem of invariant theory, we obtain
$$\Ga(X_H, L_\br^N)= \Ga(\Rep(Q,\bd), \LL^{(\sum r_h d_h N)})^{G_H}, $$  hence
$$\Ga(X_H, L_\br^N)^{G_T}= \Ga(\Rep(Q,\bd), \LL^{(\sum r_h d_h N)})^{\GL_\bd}.$$
Because $\sum_{t \in T} e_t d_t = \sum_{h \in H} r_h d_h,$ we see that
$$\Ga(X_T, L_\ee^N)^{G_H}= \Ga(\Rep(Q,\bd), \LL^{(\sum r_h d_h N)})^{\GL_\bd} = \Ga(X_H, L_\br^N)^{G_T}.$$
This implies that we have natural isomorphisms of GIT quotients
$$X_T^{ss}(L_\ee(\br))/\!/G_H \cong \Rep(Q,\bd)^{ss}(\LL(\chi_{\br} \chi_{-\ee}))/\!/{\GL_\bd} \cong 
X_H^{ss}(L_\br(\ee))/\!/G_T.$$
\end{proof}
\end{say}

\begin{say} 
Let $\vartheta=(\br, -\ee) \in \ZZ ^{|Q_0|} $. By \eqref{rd=ed}, $\bd \cdot \vartheta=0$. 
Here, ``$\cdot$'' is the canonical dot product in $ \ZZ ^{|Q_0|}$.  By King \cite{King},
a quiver representation $(V_i)_{i \in Q_0}$ is $\LL(\chi_{\br} \chi_{-\ee})$-semistable 
if and only if for all non-trivial proper subrepresentation $(E_i)_{i \in Q_0}$, 
\beq \lab{king} 
(\dim E_i) \cdot \vartheta \le 0.
\eeq
Using this, we can give a stability criterion for the action of $G_T$ on $X_H$.
Let $(V_i)_{i \in H} \in X_H= \prod_{h \in H} \Gr(d_h, \CC^{n_h})$ be any point. Note that
for each $h \in H$, we have a fixed decomposition
$$ \CC^{n_h} = \oplus_{a \in Q_1(h)} \CC^{d_{\bt(a)}}$$
where $Q_1(h)= \{a \in Q_1, \bh(a)=h \}$. Then $(V_i)_{i \in H} $ is semistable with respect to 
$L_\br(\ee)$ if and only if for all $(E_i)_{i \in H}$ with $E_i$ a subspace of $V_i$, we have
\beq \lab{king2} 
\sum_{i \in H} r_i \dim E_i \leq \sum_{i \in H} \sum_{a \in Q_1(i)} e_{\bt(a)} \dim (E_{\bt(a)} 
\cap \mathbb{C}^{d_{\bt(a)}}). 
\eeq
Likewise, for any point  $(V_j)_{j \in T} \in X_T= \prod_{t \in T} \Gr(d_t, \CC^{n_t})$,
it is semistable with respect to $L_\ee(\br)$ if and only if for all $(E_j)_{j \in T}$ with $E_j$ 
a subspace of $V_j$, we have
\beq \lab{king3}  
\sum_{j \in T} \sum_{a \in Q_1(j)} r_{\bh(a)} \dim (E_{\bh(a)} \cap \mathbb{C}^{d_{\bh(a)}}) 
 \leq \sum_{j \in T} e_j \dim E_j. 
\eeq

Note that,  replacing $\leq$ by $<$ in \eqref{king}, \eqref{king2} and \eqref{king3}, 
we obtain the conditions for stable representations.
\end{say}

\begin{say} 
Note that either of the GIT quotients
 $$X^{ss}(L_\ee(\br))/\!/G_H  \and Y^{ss}(L_\br(\ee))/\!/G_T$$ is 
% the quiver moduli $\cR ep(Q,\bd)_{\vartheta}$ determined by $\vartheta=(\br, -\ee)$ 
the quiver moduli determined by $\vartheta=(\br, -\ee)$ 
for the fence quiver (\ref{fence}) with the given dimension vector $\bd$.  
It would be an interesting problem  to find other classes of quivers and dimension vectors 
such that their quiver varieties have the kind of geometric interpretations as 
in Theorem \ref{reductiveGM2}.
\end{say}

\begin{say} \lab{starQuiver} As examples, we now revisit the GM correspondence of \cite{Hu05} using
the language of quivers. For this, we let $Q$ be the star quiver with vertices $Q_0=\{0, 1, \cdots, m\}$
with $$Q_1=\{ 1\to 0, \cdots, m\to 0\}$$ (0 is the unique head for all arrows).  This is a special case of fence quivers
where $H$ consists of a single vertex.  Let
$$\bd = (n, k_1, \cdots, k_m) \in {\mathbb N}^{m+1}$$ such that for each $1 \le i \le m$,
$ k_i \le n \le \sum_i k_i$.  In this case, we have
$$\Rep (Q, \bd)=\bigoplus_{i=1}^m \mat_{n,k_i}, \quad \GL_\bd = \GL_n \times \prod_{i=1}^m \GL_{k_i}$$
where as in \eqref{action}, $\GL_n$ acts by left multiplication and $ \prod_{i=1}^m\GL_{k_i}$ acts on the right 
by the inverse multiplication, component-wise.
%
%We set $$\bk=(k_1, \cdots, k_m) \and H_\bk= \prod_{i=1}^m\GL_{k_i}.$$
%Observe here that a canonical geometric quotient of $\Rep (Q, \bd)$ by the group  $H_\bk$ is
%$$X_\bk=\prod_{i=1}^m \Gr(k_i, \CC^n)$$ with the residual action of $\GL_n$;
%a canonical geometric quotient of $\Rep (Q, \bd)$ by the group  $ \GL_n$ is
%$$Y_n= \Gr(n, \CC^k)$$ with the residual action of $H_\bk$, where $k=\sum_{i=1}^m k_i$.
%For each of the above actions, the space of ample linearizations is the set of $m-$tuples of integers
%$$\br=(r_1, \cdots, r_m) \in {\mathbb N}^m.$$  We let
%$$X_\bk^{ss}(\br )\and Y_n^{ss}(\br)$$ be the corresponding sets of semistable points on $X_\bk$ and $Y_n$, respectively.
%
We have in this case
$$X_T= \prod_{i=1}^m \Gr(k_i, \CC^n) \and X_H= \Gr(n, \CC^k) $$
where $k=\sum_{i=1}^m k_i$.  The group $\GL_n$ acts on $X_T$ naturally and the group
$ \prod_{i=1}^m\GL_{k_i}$ acts on $X_H$ in the natural way. Let $r \in \N$ and $\ee \in \N^m$ such that
\beq r \cdot n = \sum_i e_i \cdot k_i. \eeq
Then as a special case of Theorem \ref{reductiveGM} , we have
\end{say}

\begin{coro}\lab{reductiveGM} {\rm (\cite{Hu05})}
There is a natural one-to-one correspondence between the set of GIT quotients of $\prod_{i=1}^m \Gr(k_i, \CC^n)$ 
by $\GL_n$ and the set of GIT quotients of  $ \Gr(n, \CC^k)$  by $ \prod_{i=1}^m\GL_{k_i}$. Precisely,
suppose that $\br \in \N$ and $\ee \in \N^m$ satisfy the compatibility condition $rn = \sum_i e_i k_i$,
then we have a natural isomorphism between $ \prod_{i=1}^m \Gr(k_i, \CC^n)(L_\ee(\br))/\!/\GL_n$ and
$ \Gr(n, \CC^k)^{ss}(L_\br(\ee))/\!/\prod_{i=1}^m\GL_{k_i}$.\end{coro}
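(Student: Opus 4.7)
The plan is to derive Corollary \ref{reductiveGM} as a direct specialization of Theorem \ref{reductiveGM2}. The star quiver described in \ref{starQuiver} is clearly of fence type in the sense of Definition \ref{fence}, with a single head vertex (the vertex $0$) and tail set $\{1,\ldots,m\}$. So the proof should amount to matching notation and checking that the general hypotheses reduce to the stated ones.

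First, I would set $H=\{0\}$ and $T=\{1,\ldots,m\}$, with $d_0=n$ and $d_i=k_i$. The integers defined in \eqref{nv} then specialize to
$$n_0 = \sum_{a\in Q_1,\, h(a)=0} d_{t(a)} = \sum_{i=1}^m k_i = k, \qquad n_i = d_{h(a_i)} = n$$
for each tail $i$, since each tail is incident to a unique arrow going to the head $0$. This identifies $X_T = \prod_{i=1}^m \Gr(k_i,\CC^n)$ and $X_H = \Gr(n,\CC^k)$, and the acting groups $G_H = \GL_n$, $G_T = \prod_i \GL_{k_i}$, all matching the statement.

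Next, I would translate the linearizations: since $|H|=1$, the character $\chi_\br$ from \eqref{characters} becomes $g\mapsto \det(g)^r$ for a single nonnegative integer $r$, while $\chi_\ee$ remains $\prod_i \det(g_i)^{e_i}$. The compatibility condition \eqref{rd=ed} then collapses to $r\cdot n = \sum_i e_i\cdot k_i$, which is precisely the hypothesis of the corollary.

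With all data aligned, the natural isomorphism
$$X_T^{ss}(L_\ee(\br))/\!/G_H \;\cong\; X_H^{ss}(L_\br(\ee))/\!/G_T$$
is supplied verbatim by Theorem \ref{reductiveGM2}. I do not anticipate any genuine obstacle, since the work has been done upstream; the only point worth double-checking is that the two rewritings of $\Rep(Q,\bd)$ (by rows over each tail, by columns over each head) used in the proof of Theorem \ref{reductiveGM2} specialize correctly. Here they reduce respectively to viewing $\Rep(Q,\bd)=\bigoplus_i \mat_{n,k_i}$ as either $m$ separate $n\times k_i$ blocks (giving the $\GL_n$-invariants as sections of $\boxtimes_i \sO(e_i)$ on $\prod_i\Gr(k_i,\CC^n)$) or as a single $n\times k$ matrix (giving the $\prod_i\GL_{k_i}$-invariants as sections of $\sO(r)$ on $\Gr(n,\CC^k)$). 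Both identifications are instances of the first fundamental theorem of invariant theory already invoked in the proof of Theorem \ref{reductiveGM2}, so the corollary follows.
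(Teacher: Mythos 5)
Your proposal is correct and follows exactly the route the paper takes: the paper itself presents this corollary as the specialization of Theorem \ref{reductiveGM2} to the star quiver of \ref{starQuiver}, with the same identifications $H=\{0\}$, $T=\{1,\ldots,m\}$, $n_0=k$, $n_i=n$, and the compatibility condition collapsing to $rn=\sum_i e_ik_i$. Your additional check that the row/column rewritings of $\Rep(Q,\bd)$ specialize correctly is a sound (if implicit in the paper) verification.
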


\bigskip

%%%%%%%%%%%%%%%%%%%%%%%%%%%%%%%%%%%%%%%%%%%%%%%%%%%%%%%%%%%%%%%%%%%%%%%%%%%%%%%%%%%%%
%%%%%%%%%%%%%%%%%%%%%%%%%%%%%%%%%%%%%%%%%%%%%%%%%%%%%%%%%%%%%%%%%%%%%%%%%%%%%%%%%%%%%

\section{Parabolic Quivers and  Correspondence}\lab{parabolic}

%%%%%%%%%%%%%%%%%%%%%%%%%%%%%%%%%%%%%%%%%%%%%%%%%%%%%%%%%%%%%%%%%%%%%%%%%%%%%%%%%%%%%
%%%%%%%%%%%%%%%%%%%%%%%%%%%%%%%%%%%%%%%%%%%%%%%%%%%%%%%%%%%%%%%%%%%%%%%%%%%%%%%%%%%%%

\begin{defi} 
Let $Q=(Q_0,Q_1,\bh,\bt)$ be a quiver. A representation of $Q$ with parabolic structures 
is a representation of the quiver $Q$ together with some (partial) flags of $V_b$
at every vertex $b \in Q_0$.
\end{defi}

\begin{say}
At some vertices, the partial flags may be trivial. When all are trivial, 
we have an ordinary quiver representation.
\end{say}

\begin{say}  
To specify the flags,  for any vertex $v \in Q_0$, we fix a partition 
$$d_v= d_{v_1} + \cdots +d_{v_s}$$ of $d_v$ by positive integers
where $s=s(v)$ is a positive integer depending on the vertex $v$. We
let  $P_v$ be the parabolic subgroup of $\GL_{d_v}$  consisting of 
the block upper triangular matrices whose
diagonal blocks are of the size $(d_{v_1}, \cdots , d_{v_s})$
\beq\label{Parabolic}
P_v=\left\{ \left[
\begin{array}{cccc}
B_{11} & B_{12} & \cdots & B_{1s} \\
& B_{22} &  & \vdots \\
&  & \ddots &  \\
0 &  &  & B_{ss}%
\end{array}%
\right] \right\}
\eeq%
where $B_{ii}\in GL_{d_{v_i}}$ for $1 \leq i \leq s$.
\end{say}

\begin{say}
Associated to each $v \in Q_0$, we define the following (partial) flag variety
\begin{equation*}\label{flag variety}
Y_v:= Fl(d_{v_1}, \cdots , d_{v_s};\mathbb{C}^{n_v})=\left\{ \left( 0\subset V_{1}\subset
\cdots \subset V_{s}\subset \mathbb{C}^{n_v}\right) :\dim
V_{i}=\sum_{j=1}^{i}d_{v_j}\right\}
\end{equation*}
where $n_v$ is as define in \eqref{nv}.
For every $v \in Q_0$,  an $s$-tuple $\br =(r_{1},\cdots ,r_{s})\in \mathbb{N}^s$ defines
 a (very) ample line bundle over $Y_v$:
\begin{equation*}
L_{\mathbf{r}}=\sO_{Gr(d_{v_1}, \CC^{n_v})}(r_{1})\otimes \sO_{Gr(d_{v_1}+d_{v_2}, \CC^{n_v})}(r_{2})
\otimes \cdots \otimes\sO_{Gr(d_v, \CC^{n_v})}(r_s)
\end{equation*}%
(recall here that $s=s(v)$ depends on $v$).
This line bundle is induced from the Pl\"ucker embedding
of the flag variety  $Y_v$ into the product of the Grassmannian
$\prod_{i}\Gr(\sum_{j=1}^{i}d_{v_j},\mathbb{C}^{n_v}\mathbb{)}$.
The action of the reductive group $\GL_{d_v}$ lifts canonically to
$L_{\br}$, making it a $\GL_{d_v}$-linearized line bundle.
The parabolic subgroup $P_v$ inherits  the action.
\end{say}

\begin{say}  The parabolic subgroup $P_v$ has the group of characters of dimension $s$,
we can twist the $P_v$-linearized line bundle $L_{\mathbf{r}}$ by its characters.  For this, note that
 the commutator subgroup $P_v^{\prime }$ of $P_v$
consists of elements with $B_{ii}\in SL_{d_{v_i}}$ for each $i$ as in
(\ref{Parabolic}).  Therefore,  we have $P_v/P_v^{\prime }\cong \left( \mathbb{C}^{\ast }\right) ^{s}$
and this can be identified with the center of $P_v$ by
\begin{eqnarray} \label{P-character1}
(\tau _{1},\cdots ,\tau _{s}) &\longmapsto &\left[
\begin{array}{cccc}
\tau _{1}I_{d_{v_1}} &  &  & 0 \\
& \tau _{2}I_{d_{v_2}} &  &  \\
&  & \ddots &  \\
0 &  &  & \tau _{s}I_{d_{v_s}}%
\end{array}%
\right] \in P_v.
\end{eqnarray}
Now for any $s$-tuple of positive integers
$$\ee=(e_{1},\cdots, e_{s})\in \mathbb{N}^{s},$$
it defines a character $\mu_\ee$ of $P_v$
\begin{eqnarray}\label{P-character2}
\mu _{\mathbf{e}}:P_v &\rightarrow &\mathbb{C}^{\ast } \\
\mu _{\mathbf{e}}(\tau _{1},\cdots ,\tau _{s}) &=&\tau
_{1}^{d_{v_1}(e_{1}+e_{2}+\cdots +e_{s})}\tau _{2}^{d_{v_2}(e_{2}+e_{3}+\cdots
+e_{s})}\cdots \tau _{s}^{d_{v_s}e_{s}}. \notag
\end{eqnarray}
Then the character defines a $P_v$-linearized line bundle $L_{\br} (\ee)$ over $Y_v$.
We should make a useful note here:
both $\br$ and $\ee$ are some chosen $s$-tuples of positive integers with $s=s(v)$ depending on $v$;
$\br$ defines a line bundle $L_\br$; $\ee$ defines a character $\mu_\ee$. Of course, the roles of $\br$ and $\ee$ can
be switched.
\end{say}

\begin{say}
Now we set
$$Y_T= \prod_{t \in T} Y_t \and Y_H = \prod_{h \in H} Y_h.$$
We also let
$$P_H= \prod_{h \in H} P_h \and P_T = \prod_{t \in T} P_t.$$
Then $P_H$ acts on $Y_T$ naturally and $P_T$ acts on $Y_H$ naturally.
Suppose that for every $v \in Q_0$, we have chosen a pair of $s(v)$-tuples
$\br (v)$ and $\ee (v)$ in $\N^{s(v)}$.  Then over $Y_T$, we have an induced ample line bundle
$$L_{\br_T} = \boxtimes_{t \in T} L_{\br(t)}.$$ For any $h \in H$, $\ee(h)$ defines a character of
$P_h$, hence their product induces a character $\ee_H$ of $P_H$. This gives rise to a
$P_H$-linearized line bundle $L_{\br_T}(\ee_H)$. Thus we have the Zariski open subset
$Y_T^{ss}(L_{\br_T}(\ee_H))$  and its quotient
$Y_T^{ss}(L_{\br_T}(\ee_H))/\!/P_H$
(see \cite{Hu06} for a quotient theory of parabolic subgroup actions).

Likewise, we have the induced ample line bundle
$$L_{\ee_H} = \boxtimes_{h \in H} L_{\ee(h)}$$
over $Y_H$. Then the product of all characters $\br (t)$ for all $t \in T$ gives rise to
a character $\br_T$  of $P_T$ and hence a
$P_T$-linearized line bundle $L_{\ee_H}(\br_T)$. Thus we have the Zariski open subset
$Y_H^{ss}(L_{\ee_H}(\br_T))$  and its quotient
$Y_H^{ss}(L_{\ee_H}(\br_T))/\!/P_T.$
 \end{say}

\begin{say} We are now almost  ready to state our main geometric theorem of this section.
Before making the statement, for the similar reason as in \eqref{rd=ed}, we need to impose a condition.
 \begin{equation}\lab{cond}
 \sum_{t \in T}\sum_{1\leq i\leq s(t)} d_{t_i} (r_{i}+\cdots +r_{s(t)})=
 \sum_{h \in H}\sum_{1\leq i\leq s(h)} d_{h_i} (e_{i}+\cdots +e_{s(h)}).
\end{equation}
%We denote this number by $c$.

%This is a natural condition to ensure that
%the character $\mu _{\mathbf{e}}\times \chi _{\mathbf{r}}$ descends to a character of the product group
%$P_{n}\times H_{k}/\{(zI_{n},z^{-1}I_{k}):z\in \mathbb{C}^{\ast }\}$.
%and we can consider a $G_{nk}$-linearization of the trivial line bundle $\mathcal{L}$. If
%necessary, we will specify this linearization by $\mathcal{L}_{\mathbf{dr}}$.
\end{say}

%%%%%%%%%%%%%%%%%%%%%%%%%%%%%%%%%%%%%%%%%%%%%%%%%%%%%%%%%%%%%%%%%%%%%%%%%%%%%%%%%%%%%
\begin{theo}  \lab{parabolicGM}
There is an one-to-one correspondence between the set of GIT quotients of
$Y_T$ by $P_H$ and the set of GIT quotients of $Y_H$ by $P_T$. More precisely,
assume that for all $v \in Q_0$, the chosen pairs
$\br(v)$ and $\ee(v)$ in $\N^{s(v)}$ satisfy  (\ref{cond}). Then we have a natural
isomorphism between the quotient
$Y_T^{ss}(L_{\br_T}(\ee_H))/\!/P_H$  and the quotient
$Y_H^{ss}(L_{\ee_H}(\br_T))/\!/P_T.$
\end{theo}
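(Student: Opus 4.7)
The proof parallels that of Theorem \ref{reductiveGM2}, with the first fundamental theorem of invariant theory replaced by its parabolic refinement (Borel--Weil for partial flags) and classical GIT replaced by the parabolic GIT developed in \cite{Hu06}. The plan is to identify both graded section rings
$$\bigoplus_N \Ga\!\left( Y_T,\,L_{\br_T}(\ee_H)^N\right)^{P_H} \quad \text{and} \quad \bigoplus_N \Ga\!\left( Y_H,\,L_{\ee_H}(\br_T)^N\right)^{P_T}$$
with one and the same algebra of $(P_H \times P_T)$-semi-invariants on $\Rep(Q,\bd)$, and then to invoke \cite{Hu06} to upgrade this algebraic identification to a canonical isomorphism of GIT quotients.

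For the local input, I would fix a vertex $v \in Q_0$ and realize $Y_v$ as the geometric quotient of the full-rank open locus $\mat_{n_v, d_v}\ucirc$ by the right action of $P_v$, sending a matrix with column blocks of widths $(d_{v_1},\ldots,d_{v_{s(v)}})$ to the flag of partial column spans. A direct computation on Pl\"ucker coordinates shows that $\Ga(Y_v, L_{\br(v)}^N)$ is the space of $P_v$-semi-invariants on $\mat_{n_v, d_v}$ whose weight on a block-diagonal element $\diag(B_{11},\ldots,B_{s(v)\,s(v)})$ is $\prod_j \det(B_{jj})^{N(r_j+r_{j+1}+\cdots+r_{s(v)})}$; the twist by the central character $\mu_{\ee(v)}^N$ in the linearization then folds in the corresponding $e$-weights described in (\ref{P-character1})--(\ref{P-character2}). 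Applying this vertex by vertex and grouping $\Rep(Q,\bd) = \bigoplus_{t \in T}\mat_{n_t,d_t}$ exactly as in the proof of Theorem \ref{reductiveGM2} identifies the first ring above with the ring of $(P_H \times P_T)$-semi-invariants on $\Rep(Q,\bd)$ for a specific character $\xi_T$ assembled from the $\br(t)$'s (which feed into $P_T$) and the $\ee(h)$'s (which feed into $P_H$). Regrouping instead as $\Rep(Q,\bd) = \bigoplus_{h \in H}\mat_{d_h,n_h}$ yields the symmetric identification of the second ring with the $(P_H \times P_T)$-semi-invariants for another character $\xi_H$, with the roles of heads and tails swapped.

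Condition (\ref{cond}) is precisely the balance that forces $\xi_T = \xi_H$; it is the parabolic analogue of the equation $\sum r_h d_h = \sum e_t d_t$ from (\ref{rd=ed}), and the partial sums appearing in (\ref{cond}) are exactly the nested Pl\"ucker sums computed in the previous step. Once the two semi-invariant rings coincide, taking $\Proj$ and invoking the parabolic GIT theory of \cite{Hu06} yields the asserted natural isomorphism of GIT quotients. The step I expect to be the main obstacle is this character bookkeeping: one must track how the nested partial sums of $\br(v)$ and $\ee(v)$ propagate through the block structure of each $P_v$, ensure that the diagonal central subgroup $K \subset \GL_\bd$ acting trivially (as in the reductive case) is correctly replaced by its parabolic counterpart so that the characters descend, and verify via a codimension estimate on the non-full-rank locus that the semi-invariants on $\mat\ucirc$ extend uniquely to semi-invariants on the full $\Rep(Q,\bd)$.
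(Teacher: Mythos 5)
Your proposal follows essentially the same route as the paper: identify $\Rep(Q,\bd)$ with $\bigoplus_{t\in T}\mat_{n_t,d_t}$ (respectively $\bigoplus_{h\in H}\mat_{d_h,n_h}$), realize both graded section rings $\bigoplus_N\Gamma(Y_T,L_{\br_T}(\ee_H)^{\otimes N})^{P_H}$ and $\bigoplus_N\Gamma(Y_H,L_{\ee_H}(\br_T)^{\otimes N})^{P_T}$ as the same ring of $(P_H\times P_T)$-semi-invariants on $\Rep(Q,\bd)$ using condition (\ref{cond}) to match the twists, and then pass to the quotients via \cite{Hu06}. The paper simply cites \cite[\S 9]{Fu97} for the identification of flag-variety sections with parabolic semi-invariants where you re-derive it from the full-rank locus and Pl\"ucker coordinates, but the argument is the same.
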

%%%%%%%%%%%%%%%%%%%%%%%%%%%%%%%%%%%%%%%%%%%%%%%%%%%%%%%%%%%%%%%%%%%%%%%%%%%%%%%%%%%%%

\begin{proof} As in the proof of Theorem \ref{reductiveGM2}, we  first identify $\Rep(Q,\bd)$ with
$$\bigoplus_{t \in T} \mat_{n_t,d_t}.$$
 Then for any $N>0,$ by \cite[\S 9]{Fu97}, we have
 \begin{equation*}
\Gamma (Y_T, L_{\br_T}^{\otimes N}) \cong \Gamma (\Rep(Q,\bd),
\LL^{\otimes bN})^{P_T}
\end{equation*}
where $b=\sum_{t \in T}\sum_{1\leq i\leq s(t)} d_{t_i} (r_{i}+\cdots +r_{s(t)})$.
Likewise, we have
 \begin{equation*}
\Gamma (Y_H, L_{\br_H}^{\otimes N}) \cong \Gamma (\Rep(Q,\bd),
\LL^{\otimes cN})^{P_H}
\end{equation*}
$c= \sum_{h \in H}\sum_{1\leq i\leq s(h)} d_{h_i} (r_{i}+\cdots +r_{s(h)})$. Note that $b=c$. Hence, we have
\begin{equation*}
\Gamma (Y_T, L_{\br_T}(\ee_H)^{\otimes N})^{P_H} \cong \Gamma (\Rep(Q,\bd),
\LL^{\otimes bN})^{P_H \times P_T} \cong \Gamma (Y_H, L_{\br_H}^{\otimes N})^{P_T}. \end{equation*}
 \medskip
This implies the natural isomorphisms of the quotients
$$Y_T^{ss}(L_{\br_T}(\ee_H))/\!/P_H \cong \Rep(Q,\bd)^{ss}(\br,\ee)/\!/(P_H \times P_T)
\cong Y_H^{ss}(L_{\ee_H}(\br_T))/\!/P_T.$$ \end{proof}

\begin{rema}
Note that the quotient $\Rep(Q,\bd)^{ss}(\br,\ee)/\!/(P_H \times P_T)$ parameterizes
the equivalence classes of semistable parabolic quivers. \end{rema}

%\begin{rema}
%Just like Grassmannians are  special case of flag varieties,
%Theorem \ref{reductiveGM2} is also a special case of Theorem \ref{parabolicGM}.
%But,  because \ref{reductiveGM2} appears somewhat different from its parabolic counterpart and because of its clarity,
%we feel it makes sense to introduce Theorem \ref{reductiveGM2} before
%introducing the notationally more involved Theorem \ref{parabolicGM}.
%\end{rema}

%\section{GM Correspondence of Mixed Types}

\begin{say} As a special case,  %In this section,
here we assume that all the parabolic structures on tails are trivial. In such a case,
Theorem \ref{parabolicGM} will specialize to a correspondence between quotients of a reductive group action
and quotients of a parabolic subgroup.
This gives a GM Correspondence of mixed types. To be more precise, in this case, we have
$Y_T=X_T =\prod_{t \in T} \Gr(d_t, \CC^{n_t}) $ is a product of Grassmannians
and $Y_H =  \prod_{h \in H} Y_h $ is a product of flag varieties. Acting on $X_T$ is  the parabolic
group $P_H$; acting on $Y_H$ is the reductive group $G_T$.  The condition \eqref{cond} becomes
 \begin{equation}
 \sum_{t \in T} d_t r_t=
 \sum_{h \in H}\sum_{1\leq i\leq s(h)} d_{h_i} (e_{i}+\cdots +e_{s(h)}).
\end{equation}
Under this condition,  we have a natural
isomorphism between the  quotient
$X_T^{ss}(L_{\br_T}(\ee_H))/\!/P_H$ by parabolic subgroup  and the quotient
$Y_H^{ss}(L_{\ee_H}(\br_T)/\!/G_T$ by reductive group.
(For further correspondence between quotients by a reductive group and quotients by its parabolic subgroups,
consult \cite{Hu06}.)
\end{say}

%\begin{proof}
%By combining the relevant part of the proofs of Theorems \ref{reductiveGM2} and \ref{parabolicGM},
%the proof of this theorem is straightforward. We omit further details.
%\end{proof}

\begin{say}  We may further specialize to the case when $Q$ is the star quiver
as considered in  \ref{starQuiver}.
In this case, $Y_T$ is $\prod_{t \in T} \Gr(d_t, \CC^n)$ and $Y_H$ is the (single, partial) flag variety  consisting
flags of the type
$$ 0\subset V_{1}\subset
\cdots \subset V_{s}\subset \mathbb{C}^{d}, \quad \dim
V_{i}=\sum_{j=1}^{i}n_{j}$$
where $d= \sum_{t \in T} d_t$ and $n=\sum_{i =1}^s n_i$ is a partition of $n$.
Over the product of the Grassmannians $Y_T$, we have a natural diagonal action of the parabolic group $P_H$;
over the (partial) flag variety $Y_H$, we have the action of $G_T=\prod \GL_{d_t}$.
The condition \eqref{cond} now reads
 \begin{equation}
  \sum_{t \in T} d_t r_t=
 \sum_i n_i (e_{i}+\cdots +e_s).
\end{equation}
  \end{say}

\bigskip

%%%%%%%%%%%%%%%%%%%%%%%%%%%%%%%%%%%%%%%%%%%%%%%%%%%%%%%%%%%%%%%%%%%%%%%%%
%%%%%%%%%%%%%%%%%%%%%%%%%%%%%%%%%%%%%%%%%%%%%%%%%%%%%%%%%%%%%%%%%%%%%%%%%

\section{Multi-Reciprocity Algebras}\label{reciprocity}

%%%%%%%%%%%%%%%%%%%%%%%%%%%%%%%%%%%%%%%%%%%%%%%%%%%%%%%%%%%%%%%%%%%%%%%%
%%%%%%%%%%%%%%%%%%%%%%%%%%%%%%%%%%%%%%%%%%%%%%%%%%%%%%%%%%%%%%%%%%%%%%%%%

In this section, we study representation theoretic correspondences matching
with our geometric ones.

\begin{say}
For reductive groups $K$ and $G$ with $K\subset G$, {we} consider
irreducible representations $V$ and $W$ of $K$ and $G${,} respectively. By
Schur's lemma, the multiplicity of $V$ in $W$ as a representation of $K$ is
equal to the dimension of the space
\begin{equation*}
Hom_{K}(V,W),
\end{equation*}%
which is called the \textit{multiplicity space}. The \textit{branching rule}
under the restriction of $G$ down to $K$ is a description of the
multiplicity spaces. {A special case of the above is when $G=K\times \cdots
\times K$ and $K$ is identified with the diagonal subgroup in $G$. In this
case,} the multiplicity spaces
\begin{equation*}
Hom_{K}(V,W_{1}\otimes \cdots \otimes W_{m})
\end{equation*}%
describe the decompositions of tensor products of $K$-modules $W_{i}$. {In
what follows,} we shall describe the invariant section spaces
\begin{equation*}
\Gamma (\Rep(Q,\bd),\LL^{\otimes b})^{P_{H}\times P_{T}}
\end{equation*}%
as graded components of an algebra encoding branching rules of different
types.

\end{say}

\begin{say}

First we recall Young diagrams as a labeling system for irreducible
polynomial representations of $\GL_{n}$. Every irreducible polynomial
representation of $\GL_{n}$ is uniquely labeled by, under the identification
with its highest weight, a Young diagram with no more than $n$ rows. Let $%
\rho _{n}^{F}$ denote the irreducible representation of $\GL_{n}$ labeled by
Young diagram $F=(f_{1},\cdots ,f_{n})\in \mathbb{Z}^{n}$ with $f_{1}\geq
\cdots \geq f_{n}\geq 0$. {Here, $(f_{1},\cdots ,f_{n})$ is identified with
the highest weight of $\rho _{n}^{F}$ .} The dual representation of $\rho
_{n}^{F}$ has the highest weight
\begin{equation*}
F^{\ast }=(-f_{n},\cdots ,-f_{1})
\end{equation*}%
and will be denoted by $\rho _{n}^{F^{\ast }}$. We write $\ell (F)$
for the number of non-zero entries in $F$. If
the entries $f_{i}$ of $F$ repeat $a_{i}$ times, then we also write $%
F=(f_{1}^{a_{1}},f_{2}^{a_{2}},\cdots )$ with $f_{1}>f_{2}>\cdots >0$. For
example, $F=(4,4,3,2,2)$ or $(4^{2},3^{1},2^{2})$ can be drawn as%
\begin{equation*}
\young(\ \ \ \ ,\ \ \ \ ,\ \ \ ,\ \ ,\ \ )
\end{equation*}%
and $\ell (F)=5.$ Then the Young diagram for $F^{\ast }$ can be drawn
by rotating $F$ around its center by $180 ^\circ$.
\end{say}

\begin{say}
For $(k_{1},\cdots ,k_{s})\in \mathbb{N}^{s}$,
let $k=k_{1}+\cdots +k_{s}$. We let $P_{k}$ denote the parabolic
subgroup of $\GL_{k}$ consisting of the block upper triangular matrices
whose diagonal blocks are of the sizes $k_{1},\cdots ,k_{s}$%
\begin{equation*}
P_{k}=\left\{ \left[
\begin{array}{cccc}
B_{11} & B_{12} & \cdots & B_{1s} \\
& B_{22} &  & \vdots \\
&  & \ddots &  \\
0 &  &  & B_{ss}%
\end{array}%
\right] \right\}
\end{equation*}%
where $B_{ii}\in GL_{k_{i}}$ for $1 \leq i \leq s$.
Note that the commutator subgroup $P_{k}^{\prime }$ of $P_{k}$ contains 
$\mathrm{{SL}_{k_i}}$'s on the diagonal.
\begin{lemm}
\label{P-invariant highest} Let $P_{k}$ be the parabolic subgroup of $GL_{k}$
as given above and $P_{k}^{\prime }$ be its commutator subgroup. Then the
dimension of the $P_{k}^{\prime }$-invariant subspace of $\rho _{k}^{F}$ is
at most $1$. It {equals} $1$ if and only if $F$ is of the form %
$$(f_{1}^{k_{1}},f_{2}^{k_{2}},\cdots ,f_{s}^{k_{s}}) \in \mathbb{N}^{s}$$
with $f_{1}>\cdots >f_{s}>0$. In particular, if $s=1$ then $k=k_1$ and the dimension of
$(\rho _{k}^{F})^{\mathrm{{SL}_{k}}}$ is $1$ only when $F$ is of the
form $(f_{1}^{k})$, i.e., a rectangular diagram with $k$ rows.
\end{lemm}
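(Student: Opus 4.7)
The plan is to compare $P_k'$ with the full maximal unipotent subgroup $N^+ \subset \GL_k$ and then deduce the classification from highest-weight theory applied blockwise.

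First, I would identify $P_k'$ explicitly. With the Levi decomposition $P_k = U_k \rtimes L$, where $L = \prod_{i=1}^s \GL_{k_i}$ and $U_k$ is the strictly block-upper-triangular unipotent radical, one checks that $[L,L] = \prod_{i=1}^s SL_{k_i}$ and, using the non-triviality of the $L$-action on the abelianization $U_k/[U_k,U_k] \cong \bigoplus_{i<j}\Hom(\CC^{k_j},\CC^{k_i})$, that $[L,U_k]=U_k$. Hence $P_k' = U_k \cdot \prod_{i=1}^s SL_{k_i}$. Since the maximal unipotent subgroup $N^+ \subset \GL_k$ factors as $N^+ = U_k \cdot \prod_{i=1}^s N_{k_i}^+$ with $N_{k_i}^+ \subset SL_{k_i}$, it follows that $N^+ \subset P_k'$, so that $(\rho_k^F)^{P_k'} \subset (\rho_k^F)^{N^+}$. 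The right-hand side is the one-dimensional highest-weight line, which immediately yields the upper bound $\dim(\rho_k^F)^{P_k'}\leq 1$.

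To characterize when equality holds, let $v_F$ denote the highest-weight vector. The question reduces to deciding when $v_F$, which is already $N^+$-fixed and hence $U_k$-fixed, is additionally fixed by $\prod_{i=1}^s SL_{k_i}$. The $L$-submodule of $\rho_k^F$ generated by $v_F$ is the irreducible $L$-representation with highest weight $F$, which factors as $\bigotimes_{i=1}^s \rho_{k_i}^{F_i}$, where $F_i$ is the $i$-th block of $F$; in particular $v_F = \bigotimes_i v_{F_i}$. Then $v_F$ is $\prod SL_{k_i}$-fixed iff each $v_{F_i}$ is $SL_{k_i}$-fixed. Since $v_{F_i}$ is already a highest-weight vector for $\GL_{k_i}$, this happens exactly when the entire representation $\rho_{k_i}^{F_i}$ is one-dimensional, i.e., when $F_i$ is a rectangular diagram $(f_i^{k_i})$. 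Assembling the blocks yields $F = (f_1^{k_1}, f_2^{k_2}, \ldots, f_s^{k_s})$, with inequalities between successive $f_i$'s forced by dominance. The ``in particular'' clause is the special case $s=1$, $P_k = \GL_k$, $P_k' = SL_k$, in which the criterion collapses to $F = (f_1^k)$ being a single rectangle with $k$ rows.

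The main delicate point, I expect, is the identification $P_k' = U_k \cdot \prod SL_{k_i}$; once one is certain the entire unipotent radical $U_k$ lies in $P_k'$ (rather than only some proper subgroup coming from a filtration piece), the blockwise highest-weight argument is essentially formal.
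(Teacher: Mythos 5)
Your argument is correct and follows essentially the same route as the paper: the containment of the maximal unipotent subgroup in $P_k'$ gives the bound $\dim(\rho_k^F)^{P_k'}\le 1$ via highest weight theory, and the characterization of equality comes from computing invariance of the highest weight vector under the diagonal blocks $\prod_i SL_{k_i}$. You supply more detail than the paper (the verification that $P_k'$ contains the full unipotent radical, and the blockwise Levi decomposition of the highest weight line), but the underlying proof is the same.
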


\begin{proof}
Note that $P_{k}^{\prime }$ contains the maximal unipotent subgroup $U_{k}$
of $\GL_{k}$. By highest weight theory (e.g., \cite[\S 3]{GW09}), $(\rho
_{k}^{F})^{U_{k}}$ is the one dimensional subspace spanned by a highest
weight vector of $\rho _{k}^{F}$. Therefore, the dimension of $(\rho
_{k}^{F})^{P_{k}^{\prime }}$ is less than or equal to $1$. The condition for
this being exactly $1$ can be obtained by simply computing invariants under
the diagonal blocks of $P_{k}^{\prime }$ or can be found in \cite[\S 9.3]{Fu97}.
\end{proof}

The same statement holds also for the dual representations $\rho
_{k}^{F^{\ast }}$ and $F^{\ast }$.
\end{say}

\begin{say}
As we noted in (\ref{P-character1}) we have
$P_{k}/P_{k}^{\prime }\cong \left( \mathbb{C}^{\ast }\right) ^{s}$.
Then for Young diagram $F=(f_{1}^{k_{1}},\cdots ,f_{s}^{k_{s}})$,
$P_{k}/P_{k}^{\prime }$ is acting on the one dimensional space $(\rho
_{k}^{F})^{P_{k}^{\prime }}$ via the character
\begin{eqnarray*}
& \mu _{F} :\left( \mathbb{C}^{\ast }\right) ^{s}\rightarrow \mathbb{C}%
^{\ast } \\
& \mu _{F}(\tau _{1},\cdots ,\tau _{s}) = \tau _{1}^{k_{1}f_{1}}\cdots \tau
_{s}^{k_{s}f_{s}}
\end{eqnarray*}%
This notation is the same as our previous one $\mu _{\mathbf{e}}$ for $%
\mathbf{e}=(e_{1},\cdots ,e_{s})\in \mathbb{N}^{s}$ given in (\ref{P-character2})
by setting $f_{i}=e_{i}+\cdots +e_{s}$ for $1\leq i\leq s$. We let  $A_{k}^{+}$
denote the semigroup of the characters of $P_{k}$\:%
\begin{equation*}
A_{k}^{+}=\{\mu _{F}:F=(f_{1}^{k_{1}},\cdots ,f_{s}^{k_{s}})\}.
\end{equation*}
\end{say}

\begin{say}
Now we give an action of $\GL_{n}\times \GL_{k}$ on the space $\mat_{n{,}k}$ by
\begin{equation*}
(g_{1},g_{2})\cdot M=g_{1}Mg_{2}^{-1}
\end{equation*}%
for $(g_{1},g_{2})\in \GL_{n}\times \GL_{k}$ and $M\in \mat_{n,k}$.
\end{say}

\begin{lemm}\label{GLm-GLn}
\begin{enumerate}
\item With respect to the action of $\GL_{n}\times \GL_{k}$, the coordinate
algebra\ $\mathbb{C}[\mat_{n,k}]$ of $\mat_{n,k}$ decomposes as%
\begin{equation*}
\mathbb{C}[\mat_{n,k}]=\sum\limits_{\ell (F)\leq \min (n,k)}\rho
_{n}^{F^{\ast }}\otimes \rho _{k}^{F}
\end{equation*}%
where the sum runs over all $F$ with less than or equal to $\min (n,k)$
rows.

\item For a parabolic subgroup $P_{k}$ of $\GL_{k}$, the $P_{k}^{\prime }$%
-invariant subalgebra of $\mathbb{C}[\mat_{n,k}]$ is graded by the semigroup
$A_{k}^{+}$ and has decomposition%
\begin{equation*}
\mathbb{C}[\mat_{n,k}]^{P_{k}^{\prime }}=\sum\limits_{F}\rho _{n}^{F^{\ast }}
\end{equation*}%
over $F$ such that $\ell (F)\leq \min (n,k)$ and $\dim \left( \rho
_{k}^{F}\right) ^{P_{k}^{\prime }}=1$.
\end{enumerate}
\end{lemm}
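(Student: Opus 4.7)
The plan is to deduce both parts directly from classical $(\GL_n,\GL_k)$-duality together with Lemma~\ref{P-invariant highest}. For part (1), I would invoke Howe duality on the coordinate algebra $\mathbb{C}[\mat_{n,k}]=\Symm(\mat_{n,k}^{\ast})$ equipped with the two-sided action $(g_{1},g_{2})\cdot M=g_{1}Mg_{2}^{-1}$. This yields the Cauchy--Peter--Weyl decomposition
\begin{equation*}
\mathbb{C}[\mat_{n,k}]\;\cong\;\bigoplus_{F}\rho_{n}^{F^{\ast}}\otimes\rho_{k}^{F},
\end{equation*}
the sum running over Young diagrams $F$ with at most $\min(n,k)$ rows; this is standard and can be quoted from \cite[\S 9.2]{Fu97} or the analogous statement in \cite{GW09}. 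The contragredient on the $\GL_{n}$-factor reflects the convention that $g_{1}$ acts by left multiplication (so matrix entries transform by the dual of the standard column representation), and the length bound $\ell(F)\le\min(n,k)$ is the usual vanishing constraint for highest weights fitting in the rectangle.

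For part (2), I would apply the exact functor of $P_{k}^{\prime}$-invariants termwise. Since $P_{k}^{\prime}\subset\GL_{k}$ acts trivially on the left tensor factor, this yields
\begin{equation*}
\mathbb{C}[\mat_{n,k}]^{P_{k}^{\prime}}\;\cong\;\bigoplus_{F}\rho_{n}^{F^{\ast}}\otimes(\rho_{k}^{F})^{P_{k}^{\prime}}.
\end{equation*}
Lemma~\ref{P-invariant highest} says the right-hand factor is at most one-dimensional, with equality precisely when $F$ has the form $(f_{1}^{k_{1}},\cdots,f_{s}^{k_{s}})$ with $f_{1}>\cdots>f_{s}>0$; every surviving summand is therefore a single copy of $\rho_{n}^{F^{\ast}}$, which is the claimed decomposition.

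The $A_{k}^{+}$-grading comes from the residual action of the quotient torus $P_{k}/P_{k}^{\prime}\cong(\mathbb{C}^{\ast})^{s}$ on the invariant line $(\rho_{k}^{F})^{P_{k}^{\prime}}$; the direct weight computation recorded before the lemma identifies this weight with the character $\mu_{F}$ of~(\ref{P-character2}), and distinct admissible $F$ give distinct characters. The only point requiring comment is that this is genuinely a grading of algebras, not merely of vector spaces: because characters multiply under tensor product, the product of a $\mu_{F}$-eigenvector and a $\mu_{F'}$-eigenvector lies in the $\mu_{F}\mu_{F'}$-eigenspace, so the $A_{k}^{+}$-decomposition is multiplicative. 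The main place where care is needed is bookkeeping -- aligning the convention for the dual diagram $F^{\ast}$, the $P_{k}/P_{k}^{\prime}$-weight of the highest weight vector, and the definition of $\mu_{F}$ -- but once these conventions are matched, the lemma follows essentially by inspection from Lemma~\ref{P-invariant highest} and the classical duality.
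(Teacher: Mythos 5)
Your proposal is correct and follows essentially the same route as the paper: part (1) is quoted as classical $\GL_{n}$-$\GL_{k}$ duality, and part (2) is obtained by taking $P_{k}^{\prime}$-invariants termwise and invoking Lemma \ref{P-invariant highest} to identify the surviving summands and their $A_{k}^{+}$-weights $\mu_{F}$. Your extra remark that the $A_{k}^{+}$-decomposition is multiplicative (hence a genuine algebra grading) is a small point the paper leaves implicit, but the argument is otherwise the same.
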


\begin{proof}
The first statement is known as the $\GL_{n}$-$\GL_{k}$ duality
(e.g., \cite[Theorem 5.6.7]{GW09}). For the second statement, by taking
$\left( 1\times P_{k}^{\prime }\right) $-invariant subalgebra of
$\mathbb{C}[\mat_{n,k}]$, we have%
\begin{equation*}
\mathbb{C}[\mat_{nk}]^{P_{k}^{\prime }}=\sum\limits_{\ell (F)\leq \min
(n,k)}\rho _{n}^{F^{\ast }}\otimes \left( \rho _{k}^{F}\right)
^{P_{k}^{\prime }}
\end{equation*}%
Then for each $F$, from Lemma \ref{P-invariant highest}, the space $\rho
_{n}^{F^{\ast }}\otimes \left( \rho _{k}^{F}\right) ^{P_{k}^{\prime }}$ is
nonzero exactly when $\dim \left( \rho _{k}^{F}\right) ^{P_{k}^{\prime }}=1$
and in this case the space
\begin{equation*}
\rho _{n}^{F^{\ast }}\otimes \left( \rho _{k}^{F}\right) ^{P_{k}^{\prime
}}\cong \rho _{n}^{F^{\ast }}
\end{equation*}%
is the $A_{k}^{+}$-eigenspace with weight $\mu _{F}$. Hence $\mathbb{C}[\mat%
_{nk}]^{P_{k}^{\prime }}$ is the sum of $A_{k}^{+}$-eigenspaces, and this
gives the grading structure.
\end{proof}

\begin{say}
Let us begin with the coordinate algebra $\mathbb{C}[\Rep(Q,\bd)]$ of the
representation space of a fence quiver $Q=(Q_{0},Q_{1},\bh,\bt)$ with the
dimension vector $\bd$. We shall use the same notation introduced in
Section \ref{parabolic}.

By using the identification of $\Rep(Q,\bd)$ with
the direct sum of the spaces $\mat_{d_{\bh(a)},d_{\bt(a)}}$, we have%
\begin{eqnarray}
\mathbb{C}[\Rep(Q,\bd)] &=&\bigotimes_{a\in Q_{1}}\mathbb{C}[\mat%
_{d_{\bh(a)},d_{\bt(a)}}]  \label{REP-Ring1} \\
&=&\bigotimes_{h\in H}\bigotimes_{a\in Q_{1}(h)}\mathbb{C}[\mat%
_{d_{h},d_{\bt(a)}}]  \notag \\
&=&\bigotimes_{h\in H}\mathbb{C}[\mat_{d_{h},{n}_{h}}]  \notag
\end{eqnarray}%
where $Q_{1}(h)=\{a\in Q_{1}:\bh(a)=h \}$ and ${n}_{h}=\sum_{a \in Q_{1}(h)} d_{\bt(a)}$.

\medskip

To be more precise, for each $h\in H$, we set
\begin{equation*}
\left\{\bt(a):a\in Q_{1}(h)\right\}=\left\{t_{h,1},\cdots ,t_{h,m(h)}\right\}.
\end{equation*}%
If $h$ is clear from the context, then we set
\begin{equation*}
t[i]=t_{h,i}
\end{equation*}
for $1 \leq i \leq m(h)$. We shall show that for each $h \in H$, 
the ${(}{P}_{d_{h}}^{\prime }\times \prod {P}_{d_{t[i]}}^{\prime }{)}$-invariant subalgebra 
of the algebra\ $\mathbb{C}[\mat_{d_{h},{n}_{h}}]$ records two different types of branching
rules with respect to the following restrictions:
\begin{eqnarray*}
&&\GL_{d_{h}}\subset \GL_{d_{h}}\times \cdots \times \GL_{d_{h}} \hbox{ ($m(h)$ copies)}; \\
&&\GL_{d_{t[1]}}\times \cdots \times \GL_{d_{t[m(h)]}} \subset \GL_{{n}_{h}}.
\end{eqnarray*}%

Then by iterating this result over $h\in H$, we can obtain a complete description of
$\left( P^{\prime }_{H}\times P^{\prime }_{T}\right) $-invariant subalgebra of
$\mathbb{C}[\Rep(Q,\bd)]$ to show the following theorem.
\end{say}

%%%%%%%%%%%%%%%%%%%%%%%%%%%%%%%%%%%%%%%%%%%%%%%%%%%%%%%%%%%%%%%%%%%%%%%%%%%%%%%%%%%
%
% \begin{theo} \label{multi-reciprocity}
% \begin{enumerate}
% \item The algebra $\mathbb{C}[\Rep(Q,\bd%
% )]^{P_{H}^{\prime }\times P_{T}^{\prime }}$ is graded by $$\prod_{h}\left( {A}%
% _{d_{h}}^{+}{\times }\prod_{i=1}^{m(h)}{A}_{d_{t_{h,i}}}^{+}\right).$$

% \item For each $h\in H$, we consider Young diagrams $F(h)$ and $D(h,i)$ such
% that
% \begin{equation*}
% \dim \left( \rho _{d_{h}}^{F(h)}\right) ^{{P}_{d_{h}}^{\prime }}=\dim \left(
% \rho _{d_{t_{h,i}}}^{D(h,i)}\right) ^{{P}_{d_{t_{h,i}}}^{\prime }}=1
% \end{equation*}%
% for $1\leq i\leq m(h)$. \\
% The dimension of the $\prod_{h}(\mu _{F(h)},\mu
% _{D(h,1)},\cdots ,\mu _{D(h,m(h))})$-homogeneous component for the algebra $%
% \mathbb{C}[\Rep(Q,\bd)]^{P_{H}^{\prime }\times P_{T}^{\prime }}$ records
% simultaneously

% \begin{enumerate}
% \item the multiplicity of the $\prod_{h}\GL_{d_{h}}$-module $%
% \bigotimes_{h}\rho _{d_{h}}^{F(h)}$ in
% \begin{equation*}
% \bigotimes_{h}\left( \rho _{d_{h}}^{D(h,1)}\otimes \cdots \otimes \rho
% _{d_{h}}^{D(h,m(h))}\right) ;
% \end{equation*}

% \item the multiplicity of the $\prod_{h}\prod_{i}\GL_{d_{t_{h,i}}}$-module
% \begin{equation*}
% \bigotimes_{h}\left( \rho _{d_{t_{h,1}}}^{D(h,1)}\otimes \cdots \otimes \rho
% _{d_{t_{h,m(h)}}}^{D(h,m(h))}\right)
% \end{equation*}%
% in $\bigotimes_{h}\rho _{n_{h}}^{F(h)}$.
% \end{enumerate}
% \end{enumerate}
% \end{theo}
%
%%%%%%%%%%%%%%%%%%%%%%%%%%%%%%%%%%%%%%%%%%%%%%%%%%%%%%%%%%%%%%%%%%%%%%%%%%%%%%%%%%%
\begin{theo} \label{multi-reciprocity}
\begin{enumerate}
\item The algebra $\mathbb{C}[\Rep(Q,\bd)]^{P_{H}^{\prime }\times P_{T}^{\prime }}$ is graded 
by $$\prod_{h}\left( {A}_{d_{h}}^{+}{\times }\prod_{i=1}^{m(h)}{A}_{d_{t[i]}}^{+}\right).$$

\item For each $h\in H$, we consider Young diagrams $F(h)$ and $D(h,i)$ such
that
\begin{equation*}
\dim \left( \rho _{d_{h}}^{F(h)}\right) ^{{P}_{d_{h}}^{\prime }} = 
\dim \left( \rho _{d_{t[i]}}^{D(h,i)}\right) ^{{P}_{d_{t[i]}}^{\prime }} = 1
\end{equation*}%
for $1\leq i\leq m(h)$. \\
The dimension of the $\prod_{h}(\mu _{F(h)},\mu
_{D(h,1)},\cdots ,\mu _{D(h,m(h))})$-homogeneous component for the algebra $%
\mathbb{C}[\Rep(Q,\bd)]^{P_{H}^{\prime }\times P_{T}^{\prime }}$ records
simultaneously

\begin{enumerate}
\item the multiplicity of the $\prod_{h}\GL_{d_{h}}$-module $%
\bigotimes_{h}\rho _{d_{h}}^{F(h)}$ in
\begin{equation*}
\bigotimes_{h}\left( \rho _{d_{h}}^{D(h,1)}\otimes \cdots \otimes \rho
_{d_{h}}^{D(h,m(h))}\right) ;
\end{equation*}

\item the multiplicity of the $\prod_{h}\prod_{i}\GL_{d_{t_{h,i}}}$-module
\begin{equation*}
\bigotimes_{h}\left( \rho _{d_{t[1]}}^{D(h,1)}\otimes \cdots \otimes 
\rho_{d_{t[m(h)]}}^{D(h,m(h))}\right)
\end{equation*}%
in $\bigotimes_{h}\rho _{n_{h}}^{F(h)}$.
\end{enumerate}
\end{enumerate}
\end{theo}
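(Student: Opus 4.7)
The plan is to exploit the two distinct factorizations of $\mathbb{C}[\Rep(Q,\bd)]$ recorded in \eqref{REP-Ring1}, compute the $(P_H^\prime \times P_T^\prime)$-invariants in two different orders, and match the resulting numerical outcomes. A preliminary observation is that the $P_H^\prime$ action (via $G_H\subset \GL_\bd$ acting by left multiplication on each $\mat_{d_h,d_{t(a)}}$) and the $P_T^\prime$ action (via $G_T$ acting by inverse right multiplication) commute, so the invariants and their character gradings can be computed in either order without affecting the outcome.

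For part (1), I would apply Lemma \ref{GLm-GLn}(2) to each tensor factor $\mathbb{C}[\mat_{d_h,d_{t_{h,i}}}]$ in the second factorization of \eqref{REP-Ring1}, yielding an $A^+_{d_h}\times A^+_{d_{t_{h,i}}}$-grading on its $P_{d_h}^\prime\times P_{d_{t_{h,i}}}^\prime$-invariant subalgebra. Since invariants and gradings commute with tensor products, the full invariant ring $\mathbb{C}[\Rep(Q,\bd)]^{P_H^\prime\times P_T^\prime}$ inherits the grading by $\prod_h\!\left(A^+_{d_h}\times \prod_{i=1}^{m(h)} A^+_{d_{t_{h,i}}}\right)$ claimed in (1).

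For part (2)(a), take the invariants in the order $P_T^\prime$ then $P_H^\prime$. Using the factorization $\mathbb{C}[\Rep(Q,\bd)]=\bigotimes_{h,i}\mathbb{C}[\mat_{d_h,d_{t_{h,i}}}]$ together with the $\GL_{d_h}$--$\GL_{d_{t_{h,i}}}$ duality of Lemma \ref{GLm-GLn}(1), taking $P_T^\prime$-invariants selects (by Lemma \ref{P-invariant highest}) exactly the summands indexed by Young diagrams $D(h,i)$ of rectangular-block shape. The residual $P_H^\prime$-action on $\bigotimes_i \rho_{d_h}^{D(h,i)^\ast}$ factors through the diagonal copy of $\GL_{d_h}$; decomposing into irreducibles $\rho_{d_h}^{F(h)^\ast}$ and applying Lemma \ref{P-invariant highest} once more shows that the $(\mu_{F(h)},\mu_{D(h,i)})$-component has dimension equal to the multiplicity of $\rho_{d_h}^{F(h)^\ast}$ in $\bigotimes_i \rho_{d_h}^{D(h,i)^\ast}$, which by duality is the multiplicity asserted in (a).

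For part (2)(b), reverse the order: use the factorization $\bigotimes_h \mathbb{C}[\mat_{d_h,n_h}]$, apply the $\GL_{d_h}$--$\GL_{n_h}$ duality of Lemma \ref{GLm-GLn}(1), and take $P_H^\prime$-invariants first to pin down $F(h)$ to rectangular-block shape, leaving $\bigoplus_{F(h)}\rho_{n_h}^{F(h)}$ as the residual $\GL_{n_h}$-module. The subgroup $\prod_i \GL_{d_{t_{h,i}}}$ sits inside $\GL_{n_h}$ as the Levi of the column-block decomposition $\mat_{d_h,n_h}=\bigoplus_i \mat_{d_h,d_{t_{h,i}}}$; restricting $\rho_{n_h}^{F(h)}$ along this Levi, decomposing into isotypes $\bigotimes_i \rho_{d_{t_{h,i}}}^{D(h,i)}$, and finally taking $P_T^\prime$-invariants picks out the desired rectangular-block $D(h,i)$ and yields exactly the multiplicity in (b). The main obstacle lies in the careful bookkeeping of duals and of the characters $\mu_{(\cdot)}$: one must verify that the same Young diagrams $F(h),D(h,i)$ label matched homogeneous pieces in both orders, and that the embedding $\prod_i\GL_{d_{t_{h,i}}}\hookrightarrow\GL_{n_h}$ is precisely the Levi induced by the column blocks, so that the restriction step in (b) genuinely produces the Levi branching rule asserted there.
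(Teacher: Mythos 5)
Your proposal is correct and follows essentially the same route as the paper: compute the $(P_H^\prime\times P_T^\prime)$-invariants of each factor $\mathbb{C}[\mat_{d_h,n_h}]$ in the two orders (first $P_T^\prime$ via the blockwise $\GL_{d_h}$--$\GL_{d_{t_{h,i}}}$ dualities, then the diagonal $\GL_{d_h}$; versus first $P_H^\prime$ via the $\GL_{d_h}$--$\GL_{n_h}$ duality, then the Levi restriction), compare graded components, and tensor over $h\in H$. This is precisely the content of the paper's two propositions and Corollary \ref{individual reciprocity}.
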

%%%%%%%%%%%%%%%%%%%%%%%%%%%%%%%%%%%%%%%%%%%%%%%%%%%%%%%%%%%%%%%%%%%%%%%%%%%%%%%%%%%

\begin{say}
To prove this theorem, we investigate the individual
components $\mathbb{C}[\mat_{d_{h},{n}_{h}}]$ of $\mathbb{C}[\Rep(Q,\bd)]$
given in (\ref{REP-Ring1}). Then the theorem can be obtained simply by repeating 
Corollary \ref{individual reciprocity} on $\mathbb{C}[\mat_{d_{h},{n}_{h}}]$ over $h\in H$.

Now let us fix $h$ and then write $n$ for $d_{h}$, and $c_{i}$ for $d_{t[i]}$ for $1 \leq i \leq m=m(h)$. 
Also, let $\underline{c}=(c_{1},\cdots ,c_{m})$ and $c=c_{1}+\cdots +c_{m}$, and set
\begin{eqnarray*}
\GL_{\underline{c}} &=&\prod_{i}\GL_{c_{i}}\and\GL_{\underline{n}}=\prod \GL%
_{n}\hbox{ ($m$ copies)} \\
{P}_{\underline{c}}^{\prime } &=&\prod_{i}{P}_{c_{i}}^{\prime }\and{A}_{%
\underline{c}}^{+}=\prod_{i}{A}_{c_{i}}^{+}
\end{eqnarray*}
\end{say}

\begin{prop}
The following is an $\left( {A}_{n}^{+}{\times A}_{\underline{c}}^{+}\right)
$-graded algebra decomposition of
$\mathbb{C}[\mat_{nk}]^{{P}_{n}^{\prime
}\times {P}_{\underline{c}}^{\prime }}$
\begin{equation*}
\sum\limits_{(D_{1},\cdots ,D_{m})}\sum\limits_{F}\Hom_{\GL_{n}}(\rho
_{n}^{F},\rho _{n}^{D_{1}}\otimes \cdots \otimes \rho _{n}^{D_{m}})\otimes
\left( \rho _{n}^{F^{\ast }}\right) ^{{P}_{n}^{\prime }}
\end{equation*}%
where the sum runs over $F$ and $D_{i}$ such that $\ell (F)\leq \min (n,c)$,
$\ell (D_{i})\leq \min (n,c_{i})$, and
\begin{equation*}
\dim \left( \rho _{n}^{F}\right) ^{{P}_{n}^{\prime }}=\dim \left( \rho
_{c_{i}}^{D_{i}}\right) ^{{P}_{c_{i}}^{\prime }}=1
\end{equation*}
for $1\leq i\leq m$. Each graded component tells us how a $\GL_{\underline{n}}$%
-irreducible representation decomposes as a $\GL_{n}$-module.
\end{prop}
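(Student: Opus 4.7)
My plan is to take the $P_n^{\prime}\times P_{\underline{c}}^{\prime}$-invariants of $\mathbb{C}[\mat_{n,c}]$ in two stages, exploiting that the two subgroups act from opposite sides of the matrix space. Using the column-block decomposition $\mat_{n,c}=\bigoplus_{i=1}^{m}\mat_{n,c_i}$ coming from $c=c_1+\cdots+c_m$, the coordinate algebra factors as
$$\mathbb{C}[\mat_{n,c}]=\bigotimes_{i=1}^{m}\mathbb{C}[\mat_{n,c_i}],$$
so it suffices to describe each factor as a $(\GL_n\times\GL_{c_i})$-module by invoking the $\GL_n$-$\GL_{c_i}$ duality of Lemma \ref{GLm-GLn}(1), and then assemble the resulting pieces.

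Next, I would take $P_{\underline{c}}^{\prime}$-invariants factor by factor on the right. By Lemma \ref{P-invariant highest}, the subspace $(\rho_{c_i}^{D_i})^{P_{c_i}^{\prime}}$ is at most one-dimensional, and it is non-zero precisely for the $D_i$ dictated by the block sizes $(c_{i,1},\ldots,c_{i,s(c_i)})$. This collapses each right tensor factor to a single copy of the corresponding $\GL_n$-isotype, yielding
$$\mathbb{C}[\mat_{n,c}]^{P_{\underline{c}}^{\prime}}=\sum_{(D_1,\ldots,D_m)}\bigotimes_{i=1}^{m}\rho_n^{D_i^\ast}\otimes\bigotimes_{i=1}^{m}\left(\rho_{c_i}^{D_i}\right)^{P_{c_i}^{\prime}},$$
with the $D_i$ ranging over the diagrams singled out by the lemma and with the residual $\GL_n$-action on $\bigotimes_i\rho_n^{D_i^\ast}$ being diagonal.

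For the second stage I would decompose the diagonal $\GL_n$-module $\bigotimes_{i}\rho_n^{D_i^\ast}$ into $\GL_n$-isotypic components and apply the standard identity
$$W^{P_n^{\prime}}=\bigoplus_{F}\Hom_{\GL_n}(\rho_n^{F},W)\otimes(\rho_n^{F})^{P_n^{\prime}},$$
truncating to those $F$ for which $(\rho_n^F)^{P_n^{\prime}}$ is non-zero via Lemma \ref{P-invariant highest}. A change of indexing $F\leftrightarrow F^\ast$ and $D_i\leftrightarrow D_i^\ast$ aligns everything with the conventions of the statement. The $A_n^+\times A_{\underline{c}}^+$-grading is then read off from the residual action of $P_n/P_n^{\prime}\times P_{\underline{c}}/P_{\underline{c}}^{\prime}$: by the character computation in Lemma \ref{P-invariant highest}, the summand attached to $(F;D_1,\ldots,D_m)$ is the eigenspace with weight $(\mu_F,\mu_{D_1},\ldots,\mu_{D_m})$.

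The concluding sentence is then automatic: $\Hom_{\GL_n}(\rho_n^F,\rho_n^{D_1}\otimes\cdots\otimes\rho_n^{D_m})$ is by definition the multiplicity of $\rho_n^F$ in the tensor product, which is precisely the multiplicity of $\rho_n^F$ in the restriction of the $\GL_{\underline{n}}$-irreducible $\rho_n^{D_1}\boxtimes\cdots\boxtimes\rho_n^{D_m}$ along the diagonal embedding $\GL_n\hookrightarrow\GL_{\underline{n}}$. The one genuinely delicate point is the consistent bookkeeping of contragredients, since the $\GL_n$-$\GL_c$ duality naturally produces $F^\ast$'s while the statement is phrased in terms of $F$; once the indexing bijection is set up carefully, every step is a direct application of Lemmas \ref{P-invariant highest} and \ref{GLm-GLn}.
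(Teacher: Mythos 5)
Your proposal is correct and follows essentially the same route as the paper's proof: block-wise $\GL_n$--$\GL_{c_i}$ duality, then $P_{\underline{c}}^{\prime}$-invariants via Lemma \ref{P-invariant highest}, then decomposition of the diagonal $\GL_n$-module $\bigotimes_i\rho_n^{D_i^{\ast}}$ followed by $P_n^{\prime}$-invariants, with the grading read off from the residual torus action. The only cosmetic difference is that the paper spells out the multiplicity as an iterated sum of Littlewood--Richardson numbers before identifying it with $\Hom_{\GL_n}(\rho_n^{F},\rho_n^{D_1}\otimes\cdots\otimes\rho_n^{D_m})$, whereas you invoke the isotypic $\Hom$-space identity directly; you also correctly flag the contragredient bookkeeping that the paper handles implicitly.
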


\begin{proof}
By repeating the $\GL_{n}$-$\GL_{c_{i}}$ dualities to the blocks of $\mat%
_{n,c}=\mat_{n,c_{1}}\oplus \cdots \oplus \mat_{n,c_{m}}$, we obtain%
\begin{eqnarray*}
\mathbb{C}[\mat_{n,c}] &=&\mathbb{C}[\mat_{n,c_{1}}]\otimes \cdots \otimes
\mathbb{C}[\mat_{n,c_{m}}] \\
&=&\sum\limits_{(D_{1},\cdots ,D_{m})}\left( \rho _{n}^{D_{1}^{\ast
}}\otimes \rho _{c_{1}}^{D_{1}}\right) \otimes \cdots \otimes \left( \rho
_{n}^{D_{m}^{\ast }}\otimes \rho _{c_{m}}^{D_{m}}\right) \\
&=&\sum\limits_{(D_{1},\cdots ,D_{m})}\left( \rho _{n}^{D_{1}^{\ast
}}\otimes \cdots \otimes \rho _{n}^{D_{m}^{\ast }}\right) \otimes \left(
\rho _{c_{1}}^{D_{1}}\otimes \cdots \otimes \rho _{c_{m}}^{D_{m}}\right)
\end{eqnarray*}%
where the sum runs over all $m$-tuples $(D_{1},\cdots ,D_{m})$ with $%
\ell (D_{i})\leq \min (n,c_{i})$ for each $i$. Then the ${P}_{\underline{c}%
}^{\prime }$-invariants give us%
\begin{equation*}
\mathbb{C}[\mat_{n,c}]^{{P}_{\underline{c}}^{\prime
}}=\sum\limits_{(D_{1},\cdots ,D_{m})}\left( \rho _{n}^{D_{1}^{\ast
}}\otimes \cdots \otimes \rho _{n}^{D_{m}^{\ast }}\right) \otimes \left(
\rho _{c_{1}}^{D_{1}}\right) ^{{P}_{c_{1}}^{\prime }}\otimes \cdots \otimes
\left( \rho _{c_{m}}^{D_{m}}\right) ^{{P}_{c_{m}}^{\prime }}.
\end{equation*}%

Note that by Lemma \ref{P-invariant highest}, the dimension of
\begin{equation*}
W_{(D_{1},\cdots ,D_{m})}=\left( \rho _{c_{1}}^{D_{1}}\right) ^{{P}%
_{c_{1}}^{\prime }}\otimes \cdots \otimes \left( \rho
_{c_{m}}^{D_{m}}\right) ^{{P}_{c_{m}}^{\prime }}
\end{equation*}%
is at most $1$.
Then the invariant algebra $\mathbb{C}[\mat_{nc}]^{{P}_{\underline{c}%
}^{\prime }}$ is graded by the semigroup ${A}_{\underline{c}}^{+}$ or the set of these $m$%
-tuples $(D_{1},\cdots ,D_{m})$ of Young diagrams, and its graded components are exactly $m$%
-fold tensor products
\begin{equation*}
V_{(D_{1},\cdots ,D_{m})}=\rho _{n}^{D_{1}^{\ast }}\otimes \cdots \otimes
\rho _{n}^{D_{m}^{\ast }}
\end{equation*}%
of irreducible representations of $\GL_{n}$.

$V_{(D_{1},\cdots ,D_{m})}$, as a representation of $\GL_{n}$, can be
decomposed as%
\begin{eqnarray*}
V_{(D_{1},\cdots ,D_{m})} &=&\rho _{n}^{D_{1}^{\ast }}\otimes \cdots \otimes
\rho _{n}^{D_{m}^{\ast }} \\
&=&\sum\limits_{(F_{2},\cdots
,F_{m})}c_{D_{1}D_{2}}^{F_{2}}c_{F_{2}D_{3}}^{F_{3}}\cdots
c_{F_{m-1}D_{m}}^{F_{m}}\left( \rho _{n}^{F^{\ast }}\right)
\end{eqnarray*}%
where $F=F_{m}$ and $c_{F_{i-1}D_{i}}^{F_{i}}$ is the Littlewood-Richardson
number, i.e., the multiplicity of $\rho _{n}^{F_{i}}$ in $\rho
_{n}^{F_{i-1}}\otimes \rho _{n}^{D_{i}}$ with the convention $D_{1}=F_{1}$.
Therefore, $V_{(D_{1},\cdots ,D_{m})}$ contains%
\begin{equation*}
\sum\limits_{(F_{2},\cdots
,F_{m})}c_{D_{1}D_{2}}^{F_{2}}c_{F_{2}D_{3}}^{F_{3}}\cdots
c_{F_{m-1}D_{m}}^{F_{m}}
\end{equation*}%
copies of $\rho _{n}^{F^{*}}$. Note that if $\ell (F_{i})>\min (n,\ell
(F_{i-1})+\ell (D_{i}))$, then $c_{F_{i-1}D_{i}}^{F_{i}}=0$ for all $i$.
Therefore, in particular, $\ell (F)$ should be less than or equal to $\min
(n,c)$.

For $F$ with $\ell (F)\leq \min (n,c)$ and $\dim \left( \rho _{n}^{F}\right)
^{{P}_{n}^{\prime }}=1$, this multiplicity is equal to the dimension of the
invariant space
\begin{eqnarray*}
\left( V_{(D_{1},\cdots ,D_{m})}\right) ^{{P}_{n}^{\prime }}
&=&\sum\limits_{(F_{2},\cdots
,F_{m})}c_{D_{1}D_{2}}^{F_{2}}c_{F_{2}D_{3}}^{F_{3}}\cdots
c_{F_{m-1}D_{m}}^{F_{m}}\left( \rho _{n}^{F^{\ast }}\right) ^{{P}%
_{n}^{\prime }} \\
&\cong &\sum\limits_{F}\Hom_{\GL_{n}}\left( \rho _{n}^{F},\rho
_{n}^{D_{1}}\otimes \cdots \otimes \rho _{n}^{D_{m}}\right) \otimes \left(
\rho _{n}^{F^{\ast }}\right) ^{{P}_{n}^{\prime }}
\end{eqnarray*}%
and we see that the ${P}_{n}^{\prime }$-invariant algebra of $\mathbb{C}[\mat%
_{n,c}]^{{P}_{\underline{c}}^{\prime }}$ has the decomposition%
\begin{eqnarray*}
& &\sum\limits_{(D_{1},\cdots ,D_{m})}\left( V_{(D_{1},\cdots
,D_{m})}\right) ^{{P}_{n}^{\prime }}\otimes W_{(D_{1},\cdots ,D_{m})}
\\
&\cong &\sum\limits_{(D_{1},\cdots ,D_{m})}\sum\limits_{F}\left(
V_{(D_{1},\cdots ,D_{m})}\right) ^{{P}_{n}^{\prime }}   \\
&\cong &\sum\limits_{(D_{1},\cdots ,D_{m})}\sum\limits_{F}\Hom_{\GL%
_{n}}\left( \rho _{n}^{F},\rho _{n}^{D_{1}}\otimes \cdots \otimes \rho
_{n}^{D_{m}}\right) \otimes \left( \rho _{n}^{F^{\ast }}\right) ^{{P}%
_{n}^{\prime }}
\end{eqnarray*}%
and each graded component is an $\left( {A}_{n}^{+}{\times A}_{\underline{c}%
}^{+}\right) $-eigenspace. Consequently, the graded components of the
invariant algebra $\mathbb{C}[\mat_{n,c}]^{{P}_{n}^{\prime }\times {P}_{%
\underline{c}}^{\prime }}$ describe the branching multiplicities under the
restrictions of $\GL_{\underline{n}}$ down to the diagonal $\GL_{n}$.
\end{proof}

\bigskip

\begin{say}
Next, in taking the invariants of $({{P}_{n}^{\prime }\times {P}_{\underline{c}}^{\prime }})$,
by reversing the order of the procedures,
we consider the invariants of ${P}_{n}^{\prime }$ first. This provides
us a different representation theoretic description of the
$({{P}_{n}^{\prime }\times {P}_{\underline{c}}^{\prime }})$-invariant
subalgebra of $\mathbb{C}[\mat_{n,c}]$.
\end{say}

\begin{prop}
The following is an $\left( {A}_{n}^{+}{\times A}_{\underline{c}}^{+}\right)
$-graded algebra decomposition of $\mathbb{C}[\mat_{nk}]^{{P}_{n}^{\prime
}\times {P}_{\underline{c}}^{\prime }}$%
\begin{equation*}
\sum\limits_{F}\sum\limits_{(D_{1},\cdots ,D{m})}\Hom_{\GL_{\underline{c}%
}}\left( \rho _{c_{1}}^{D_{1}}\otimes \cdots \otimes \rho
_{c_{m}}^{D_{m}},\rho _{c}^{F}\right) \otimes \left( \rho
_{c_{1}}^{D_{1}}\right) ^{{P}_{c_{1}}^{\prime }}\otimes \cdots \otimes
\left( \rho _{c_{m}}^{D_{m}}\right) ^{{P}_{c_{m}}^{\prime }}
\end{equation*}%
where the sum runs over $F$ and $D_{i}$ such that $\ell (F)\leq \min (n,c)$,
$\ell (D_{i})\leq \min (n,c_{i})$, and
\begin{equation*}
\dim \left( \rho _{n}^{F}\right) ^{{P}_{n}^{\prime }}=\dim \left( \rho
_{c_{i}}^{D_{i}}\right) ^{{P}_{c_{i}}^{\prime }}=1
\end{equation*}
for $1\leq i\leq m$. Each graded component tells us how a $\GL_{c}$
irreducible representation decomposes as a $\GL_{\underline{c}}$-module.
\end{prop}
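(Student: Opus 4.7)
The plan is to repeat the argument of the previous proposition but reverse the order in which the two sets of parabolic invariants are taken. Since $P_n^{\prime}$ and $P_{\underline{c}}^{\prime}$ act on opposite sides of $\mat_{n,c}$, the two invariant operations commute, so both orders compute the same algebra; reversing them simply reinterprets the graded components---as branching multiplicities for the restriction $\GL_c\downarrow \GL_{\underline{c}}$ rather than as tensor-product multiplicities for the diagonal embedding $\GL_n\hookrightarrow \GL_{\underline{n}}$.

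First I would apply the $\GL_n$-$\GL_c$ duality of Lemma~\ref{GLm-GLn}(1) to $\mat_{n,c}$ \emph{without} first decomposing the column side into blocks, obtaining
\[
\mathbb{C}[\mat_{n,c}]=\sum_{F:\,\ell(F)\le\min(n,c)}\rho_n^{F^{\ast}}\otimes\rho_c^F.
\]
Taking $P_n^{\prime}$-invariants on the left factor and invoking Lemma~\ref{P-invariant highest} gives
\[
\mathbb{C}[\mat_{n,c}]^{P_n^{\prime}}=\sum_F(\rho_n^{F^{\ast}})^{P_n^{\prime}}\otimes\rho_c^F,
\]
where only those $F$ with $\dim(\rho_n^{F^{\ast}})^{P_n^{\prime}}=1$ survive.

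Next I would decompose $\rho_c^F$ as a $\GL_{\underline{c}}=\prod_i\GL_{c_i}$-module via Schur's lemma applied to the block-diagonal restriction:
\[
\rho_c^F=\sum_{(D_1,\ldots,D_m)}\Hom_{\GL_{\underline{c}}}\bl\rho_{c_1}^{D_1}\otimes\cdots\otimes\rho_{c_m}^{D_m},\,\rho_c^F\br\otimes\bl\rho_{c_1}^{D_1}\otimes\cdots\otimes\rho_{c_m}^{D_m}\br,
\]
and then take $P_{\underline{c}}^{\prime}$-invariants factorwise. A second application of Lemma~\ref{P-invariant highest} confines the sum to tuples $(D_1,\ldots,D_m)$ with each $\dim(\rho_{c_i}^{D_i})^{P_{c_i}^{\prime}}=1$. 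Combining the two steps and absorbing the one-dimensional factor $(\rho_n^{F^{\ast}})^{P_n^{\prime}}\cong\mathbb{C}$ will produce exactly the claimed formula. The $A_n^+$-weight is read off from the surviving $F$ (via $\mu_{F^{\ast}}$) and the $A_{\underline{c}}^+$-weight equals $\mu_{D_1}\cdots\mu_{D_m}$, so the asserted $\left(A_n^+\times A_{\underline{c}}^+\right)$-grading is automatic.

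I do not anticipate a serious obstacle: the argument is essentially bookkeeping, parallel to the proof of the previous proposition with the two invariant operations swapped. The one point that requires care is keeping the shape hypothesis on $F$ (imposed by $P_n^{\prime}$-invariance) separate from those on the $D_i$ (imposed by $P_{c_i}^{\prime}$-invariance), and observing that the Hom-space $\Hom_{\GL_{\underline{c}}}\bl\bigotimes_i\rho_{c_i}^{D_i},\rho_c^F\br$ now plays the role of a branching multiplicity for $\GL_c\downarrow\GL_{\underline{c}}$ rather than of the tensor-product multiplicity that appeared in the preceding proposition.
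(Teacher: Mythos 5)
Your proposal is correct and follows essentially the same route as the paper's own proof: apply the $\GL_n$-$\GL_c$ duality first, take $P_n^{\prime}$-invariants via Lemma \ref{P-invariant highest}, then decompose $\rho_c^F$ over $\GL_{\underline{c}}$ and take $P_{\underline{c}}^{\prime}$-invariants factorwise. The bookkeeping of the grading and the identification of the Hom-space as a branching multiplicity for $\GL_c\downarrow\GL_{\underline{c}}$ match the paper's argument.
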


\begin{proof}
Starting from the $\GL_{n}$-$\GL_{c}$ duality, we have%
\begin{equation*}
\mathbb{C}[\mat_{n,c}]^{{P}_{n}^{\prime }}=\sum\limits_{\ell (F)\leq \min
(n,c)}(\rho _{n}^{F^{\ast }})^{{P}_{n}^{\prime }}\otimes \rho _{c}^{F}
\end{equation*}

Then, by considering $\rho _{c}^{F}$ as a $\GL_{\underline{c}}$-module,
we have the following decomposition
\begin{eqnarray*}
\rho _{c}^{F} &=&\sum\limits_{(D_{1},\cdots ,D{m})}{m_{(D_{1},\cdots ,D{m}%
)}^{F}}\left( \rho _{c_{1}}^{D_{1}}\otimes \cdots \otimes \rho
_{c_{m}}^{D_{m}}\right)  \\
&\cong &\sum\limits_{(D_{1},\cdots ,D{m})}\Hom_{\GL_{\underline{c}}}\left(
\rho _{c_{1}}^{D_{1}}\otimes \cdots \otimes \rho _{c_{m}}^{D_{m}},\rho
_{c}^{F}\right) \otimes \left( \rho _{c_{1}}^{D_{1}}\otimes \cdots \rho
_{c_{m}}^{D_{m}}\right)
\end{eqnarray*}%
where ${m_{(D_{1},\cdots ,D{m})}^{F}}$ is the multiplicity of the
irreducible $\GL_{\underline{c}}$ module $\rho _{c_{1}}^{D_{1}}\otimes
\cdots \otimes \rho _{c_{m}}^{D_{m}}$ appearing in $\rho _{c}^{F}$.
By further taking its invariants under the action of ${P}_{\underline{c}%
}^{\prime }$, we have the decomposition of $\mathbb{C}[\mat_{n,c}]^{{P}%
_{n}^{\prime }\times {P}_{\underline{c}}^{\prime }}$%
\begin{equation*}
\sum\limits_{F}\sum\limits_{(D_{1},\cdots ,D{m})}\Hom_{\GL_{\underline{c}%
}}\left( \rho _{c_{1}}^{D_{1}}\otimes \cdots \rho _{c_{m}}^{D_{m}},\rho
_{c}^{F}\right) \otimes \left( \rho _{c_{1}}^{D_{1}}\right) ^{{P}%
_{c_{1}}^{\prime }}\otimes \cdots \otimes \left( \rho
_{c_{m}}^{D_{m}}\right) ^{{P}_{c_{m}}^{\prime }}
\end{equation*}%

By Lemma \ref{GLm-GLn}, the dimension of the space $\left( \rho _{c_{1}}^{D_{1}}\right) ^{{P}%
_{c_{1}}^{\prime }}\otimes \cdots \otimes \left( \rho
_{c_{m}}^{D_{m}}\right) ^{{P}_{c_{m}}^{\prime }}$ is at most $1$. Therefore,
each graded component, if it is not zero, encodes the branching rule with respect to the restriction
of $\GL_{c}$ down to $\GL_{\underline{c}}$.
\end{proof}

\begin{say}

We remark that the algebra $\mathbb{C}[\mat_{n,c}]^{{P}_{n}^{\prime }}$ can be understood
as the multi-homogeneous coordinate algebra of the flag variety %
$$Y_{\mathbf{n}}=Fl(n_{1},\cdots ,n_{s};\mathbb{C}^{c})$$%
and its graded component $(\rho_{n}^{F})^{^{{P}_{n}^{\prime }}}\otimes \rho _{c}^{F}$ for $%
F=(f_{1}^{n_{1}},\cdots ,f_{s}^{n_{s}})$ with $f_{i}=e_{i}+\cdots +e_{s}$ is
exactly the section space $\Gamma (Y_{\mathbf{n}},L_{\mathbf{e}})$. See %
\cite[\S 9]{Fu97}. We remark that the graded components are labeled by $F$ and $%
(D_{1},\cdots ,D_{m})$ or equivalently by $\mathbf{e}$ and $\mathbf{r}$. In
fact, it can be identified with the subspace of $\Gamma (Y_{\mathbf{n}},L_{%
\mathbf{e}})$ invariant under ${P}_{\underline{c}}$ and stable under ${P}_{%
\underline{c}}/{P}_{\underline{c}}^{\prime }$ with the character $\mu _{%
\mathbf{r}}$ in (\ref{P-character2}), i.e., $\Gamma \left( Y_{\mathbf{n}},
L_{\mathbf{e}}(\mathbf{r})\right) ^{{P}_{\underline{c}}}$.
\end{say}

\begin{say}
Now, by combining two propositions, we have
%%%%%%%%%%%%%%%%%%%%%%%%%%%%%%%%%%%%%%%%%%%%%%%%%%%%%%%%%%%%%%%%%%%%%%%%%%%%%
\begin{coro}\label{individual reciprocity}
The dimension of the $(\mu _{F},\mu
_{D(1)},\cdots ,\mu _{D(m)})$-homogeneous component for the $\left( {A}%
_{n}^{+}{\times A}_{\underline{c}}^{+}\right) $-graded algebra $\mathbb{C}[%
\mat_{n,c}]^{{P}_{n}^{\prime }\times {P}_{\underline{c}}^{\prime }}$ records
simultaneously

\begin{enumerate}
\item the multiplicity of the $\GL_{n}$ module $\rho _{n}^{F}$ in the tensor
product $\rho _{n}^{D_{1}}\otimes \cdots \otimes \rho _{n}^{D_{m}}$,

\item the multiplicity of the $\GL_{\underline{c}}$ module $\rho
_{c_{1}}^{D_{1}}\otimes \cdots \rho _{c_{m}}^{D_{m}}$ in $\rho _{c}^{F}$.
\end{enumerate}
\end{coro}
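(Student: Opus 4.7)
The plan is to derive this corollary by directly comparing the two $\left({A}_{n}^{+}{\times A}_{\underline{c}}^{+}\right)$-graded decompositions of the same algebra $\mathbb{C}[\mat_{n,c}]^{{P}_{n}^{\prime }\times {P}_{\underline{c}}^{\prime }}$ supplied by the two preceding propositions. Since these give two different descriptions of the same graded algebra, their $(\mu _{F},\mu _{D(1)},\cdots ,\mu _{D(m)})$-homogeneous components must be canonically isomorphic, and in particular have the same dimension. The corollary is then just the statement that the dimension can be computed in two different representation-theoretic ways, one as a branching multiplicity for the diagonal embedding $\GL_{n}\hookrightarrow \GL_{\underline{n}}$, the other for the Levi embedding $\GL_{\underline{c}}\hookrightarrow \GL_{c}$.

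To extract statement (1), I would read off the $(\mu _{F},\mu _{D(1)},\cdots ,\mu _{D(m)})$-graded piece from the first proposition, which is
$\Hom_{\GL_{n}}(\rho _{n}^{F},\rho _{n}^{D_{1}}\otimes \cdots \otimes \rho _{n}^{D_{m}})\otimes (\rho _{n}^{F^{\ast}})^{{P}_{n}^{\prime}}$.
By Lemma \ref{P-invariant highest}, under the hypothesis $\dim(\rho _{n}^{F})^{{P}_{n}^{\prime}}=1$, the second tensor factor is one-dimensional, so the total dimension is exactly the first multiplicity. For statement (2), the same graded piece is identified by the second proposition with
$\Hom_{\GL_{\underline{c}}}(\rho _{c_{1}}^{D_{1}}\otimes \cdots \otimes \rho _{c_{m}}^{D_{m}},\rho _{c}^{F})\otimes \bigotimes_{i}(\rho _{c_{i}}^{D_{i}})^{{P}_{c_{i}}^{\prime}}$.
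Again Lemma \ref{P-invariant highest} and the assumption $\dim(\rho _{c_{i}}^{D_{i}})^{{P}_{c_{i}}^{\prime}}=1$ make the tensor of invariant lines one-dimensional, so the total dimension equals the second multiplicity. Setting the two resulting expressions equal yields the reciprocity.

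The only nontrivial point requiring care is verifying that the two ${A}_{n}^{+}{\times A}_{\underline{c}}^{+}$-gradings truly coincide on the nose, so that the phrase ``$(\mu _{F},\mu _{D(1)},\cdots ,\mu _{D(m)})$-homogeneous component'' means the same subspace in both propositions. This comes down to tracking the action of the central torus $(P_{n}/P_{n}^{\prime})\times \prod_{i}(P_{c_{i}}/P_{c_{i}}^{\prime})$ on the highest-weight lines $(\rho _{n}^{F^{\ast}})^{{P}_{n}^{\prime}}$ and $(\rho _{c_{i}}^{D_{i}})^{{P}_{c_{i}}^{\prime}}$ produced by the $\GL_{n}$–$\GL_{c_{i}}$ dualities, with the weights computed from the shapes $F$ and $D_{i}$ via the character formula \eqref{P-character2}. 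Once this compatibility is recorded, the corollary follows immediately by equating the dimensions of the identified graded components.
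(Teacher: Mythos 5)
Your proposal is correct and follows essentially the same route as the paper: both compare the two $\left({A}_{n}^{+}{\times A}_{\underline{c}}^{+}\right)$-graded decompositions from the preceding propositions and equate the dimensions of the matching homogeneous components, using Lemma \ref{P-invariant highest} to see that each component is the relevant multiplicity space tensored with a one-dimensional invariant line. Your additional remark about checking that the two gradings agree via the torus action on the highest-weight lines is a sensible point of care, but it does not change the argument.
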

%%%%%%%%%%%%%%%%%%%%%%%%%%%%%%%%%%%%%%%%%%%%%%%%%%%%%%%%%%%%%%%%%%%%%%%%%%%%%%

\begin{proof}
In the above propositions, we showed that $\mathbb{C}[\mat_{n,c}]^{{P}%
_{n}^{\prime }\times {P}_{\underline{c}}^{\prime }}$, as a $\left( {A}%
_{n}^{+}{\times A}_{\underline{c}}^{+}\right)$-graded algebra, has two
different decompositions%
\begin{eqnarray*}
&&\sum\limits_{(D_{1},\cdots ,D_{m})}\sum\limits_{F}\Hom_{\GL_{n}}\left(
\rho _{n}^{F},\rho _{n}^{D_{1}}\otimes \cdots \otimes \rho
_{n}^{D_{m}}\right) \otimes \left( \rho _{n}^{F^{\ast }}\right) ^{{P}%
_{n}^{\prime }} \\
&&\sum\limits_{F}\sum\limits_{(D_{1},\cdots ,D{m})}\Hom_{\GL_{\underline{c}%
}}\left( \rho _{c_{1}}^{D_{1}}\otimes \cdots \otimes \rho
_{c_{m}}^{D_{m}},\rho _{c}^{F}\right) \otimes \left( \rho
_{c_{1}}^{D_{1}}\right) ^{{P}_{c_{1}}^{\prime }}\otimes \cdots \otimes
\left( \rho _{c_{m}}^{D_{m}}\right) ^{{P}_{c_{m}}^{\prime }}
\end{eqnarray*}%
By comparing the graded components, we see that the dimension
of the following multiplicity spaces should be the same
\begin{eqnarray*}
&&\Hom_{\GL_{n}}\left(
\rho _{n}^{F},\rho _{n}^{D_{1}}\otimes \cdots \otimes \rho
_{n}^{D_{m}}\right); \\
&&\Hom_{\GL_{\underline{c}%
}}\left( \rho _{c_{1}}^{D_{1}}\otimes \cdots \otimes \rho
_{c_{m}}^{D_{m}},\rho _{c}^{F}\right),
\end{eqnarray*}%
which correspond to the branching multiplicities in the statement.
\end{proof}

\end{say}

\begin{rema}
Our proofs for the propositions are the same as
the one given in \cite{HL07} where maximal unipotent subgroups are used
instead of the commutator subgroups of parabolic subgroups.

Corollary \ref{individual reciprocity} shows that the $\left( {P}_{n}^{\prime }\times {P}_{%
\underline{c}}^{\prime }\right) $-invariant subalgebra of $\mathbb{C}[\mat%
_{n,c}]$ encodes two different types of branching rules. With this dual
interpretation, $\mathbb{C}[\mat_{n,c}]^{{P}_{n}^{\prime }\times {P}_{%
\underline{c}}^{\prime }}$ can be called a \textit{reciprocity algebra} in
the sense of \cite{HL07, HTW08}.

Then from (\ref{REP-Ring1}), the ${P}%
_{H}^{\prime }\times {P}_{T}^{\prime }$ invariant subalgebra of $\mathbb{C}[%
\Rep(Q,\bd)]$ can be realized as the tensor product of reciprocity algebras,
and as stated in Theorem \ref{multi-reciprocity}, it encodes two sets of
different types of multiplicity spaces simultaneously. Hence, $%
\mathbb{C}[\Rep(Q,\bd)]^{{P}_{H}^{\prime }\times {P}_{T}^{\prime }}$ can be
considered a \textit{multi-reciprocity algebra}.
\end{rema}

\begin{rema}
It is also possible to develop a parallel theory in terms of
tails starting from
\begin{eqnarray*}
\mathbb{C}[\Rep(Q,\bd)] &=&\bigotimes_{t\in T}\bigotimes_{a\in Q_{1}(t)}%
\mathbb{C}[\mat_{d_{\bh(a)},d_{t}}] \\
&=&\bigotimes_{t\in T}\mathbb{C}[\mat_{n_{t},d_{t}}]
\end{eqnarray*}%
where $Q_{1}(t)=\{a\in Q_{1}:\bt(a)=t\}$ and $n_{t}=\sum_{a\in Q_{1}(t)}d_{\bh(a)}$.
\end{rema}

\begin{say}
We note that there is a nice representation theoretic
interpretation of the geometric condition (\ref{cond}). If the multiplicity
of $\rho _{n}^{F}$ in the tensor product $\rho _{n}^{D}\otimes \rho _{n}^{E}$
is positive, then the number of boxes in $D$ and $E$ is equal to the number
of boxes in $F$. For each $h\in H$, by iterating this condition on Young
diagrams $D(h,i)$ and $F(h)$ in Theorem \ref{multi-reciprocity}, we obtain the
condition: the number of boxes in all $D(h,i)$'s should be equal to the
number of boxes in $F(h)$.

To be more precise, let $F(h)$ and $D(h,i)$ be given as
\begin{eqnarray*}
F(h) &=&((e_{1}+\cdots +e_{s})^{n_{1}},(e_{2}+\cdots +e_{s})^{n_{2}},\cdots
,e_{s}^{n_{s}});  \\
D(h,i) &=&((r_{i,1}+\cdots +r_{i,s_{i}})^{k_{i,1}},(r_{i,2}+\cdots
+r_{i,s_{i}})^{k_{i,2}},\cdots ,r_{i,s_{i}}^{k_{i,s_{i}}})
\end{eqnarray*}%
for each $i$.
Then to ensure that the multiplicity of $\rho _{d_{h}}^{F(h)}$ in the tensor
product%
\begin{equation*}
\rho _{d_{h}}^{D(h,1)}\otimes \cdots \otimes \rho _{d_{h}}^{D(h,m(h))}
\end{equation*}%
of $\GL_{d_{h}}$ representations in Theorem \ref{multi-reciprocity}
to be non-zero, we need%
\begin{equation*}
\sum_{1\leq i\leq m}\sum_{1\leq j\leq s_{i}}k_{i,j}(r_{i,j}+\cdots
+r_{i,s_{i}})=\sum_{1\leq i\leq s}n_{i}(e_{i}+\cdots +e_{s}).
\end{equation*}%
For the whole algebra $\mathbb{C}[\Rep(Q,\bd)]$, by repeating this over all $%
h\in H$ and adding all of them together, we obtain the same condition we
imposed for the linearizations (\ref{cond}).
\end{say}

\begin{say}
As a special case, let us consider the star quiver given in \S \ref{starQuiver}
with the dimension vector $$\bd = (n, 1, \cdots, 1) \in {\mathbb N}^{m+1}$$ %
for $n \leq m$. In this case, the invariant sections can be explicitly described
in terms of the combinatorics of Young tableaux.

\medskip

For a partition $n=n_1 + \cdots + n_s$, let us consider its corresponding parabolic subgroup
$P_n$ of $\GL_n$ and the torus $T_m = (\mathbb{C}^{*})^m$. The section space
$\Gamma (Y_{\bn},L_{\mathbf{e}})$ of $Y_{\bn}=Fl(n_{1},\cdots ,n_{s};\mathbb{C}^{m})$ can
be identified with the summand
$(\rho_{n}^{F^{\ast}})^{P_{n}^{\prime }}\otimes \rho _{m}^{F}\ $of
$\mathbb{C}[\mat_{n,m}]^{P_{n}^{\prime } \times 1}$ for
\begin{eqnarray*}
F&=&(f_1 ^{n_1},f_2 ^{n_2},\cdots,f_s ^{n_s}) \cr
&=&((e_1 + \cdots + e_s)^{n_1}, (e_2 + \cdots +e_s)^{n_2}, \cdots, e_s ^{n_s}).
\end{eqnarray*}
The space $(\rho _{n}^{F^{\ast}})^{P_{n}^{\prime
}}\otimes \rho _{m}^{F}\cong \rho _{m}^{F}$ consists of $f\in \mathbb{C}[%
\mat_{n,m}]$ which are invariant under $P'_n$ and eigenvectors under $P_n/P'_n$
with weight $$\mu_F(\tau_1,\cdots,\tau_s)=\prod \tau_i ^{n_i f_i}.$$ They can be identified
with the products of determinants or semistandard Young tableaux of
diagram $F$ having entries from $\{1,\cdots ,m\}$.

Since the $T_{m}$-eigenspace of $\rho _{m}^{F}$ with weight $\mu_{\mathbf{r}}(t_1,\cdots,t_m)%
=\prod t_i ^{r_i}$ is exactly the space spanned by the weight vectors
of $\rho _{m}^{F}$ with weight $\mu_{\mathbf{r}}$, the elements in the $T_{m}$-invariant section
space $\Gamma (Y_{\bn},L_{\mathbf{e}}(\mathbf{r}))^{T_{m}}$ can be realized as the space
spanned by the products of determinants identifiable with semistandard
Young tableaux of diagram $F$ and content $\mathbf{r}=(r_{1},\cdots ,r_{m})$.
For further detail, see, for example, \cite[\S 9]{Fu97} and \cite{Ki08}.

\end{say}

\bigskip

\section{The Transfer Principle}\label{transfer}

\begin{say}
The so-called \textit{transfer principle} is a useful tool to study
quotients by non-reductive groups.

\begin{theo}\rm{(\cite[Theorem 9.1]{Gro97})}
For a linear algebraic group $G$, let $Z$ be a rational
$G$-module and a subgroup $H$ of $G$ be acting on $\mathbb{C}[G]$ by right
translation. If $Z$ is a $\mathbb{C}$-algebra, then there is an algebra
isomorphism
$$ Z \cong \left( \mathbb{C}[G] \otimes Z \right)^G $$
which is $H$-equivariant. In particular, we have
$$ Z^H \cong \left( \mathbb{C}[G]^H \otimes Z \right)^G.$$
\end{theo}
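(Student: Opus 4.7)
The plan is to construct an explicit algebra isomorphism $\phi: Z \to (\mathbb{C}[G] \otimes Z)^G$ with inverse given by evaluation at the identity, then verify it intertwines the residual $H$-actions so that taking $H$-invariants yields the second statement.

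First I would define $\phi: Z \to \mathbb{C}[G] \otimes Z$ by the rule $\phi(z)(g) = g \cdot z$, where the right-hand side is viewed as a $Z$-valued regular function on $G$. The first subtlety is to check that $\phi(z)$ actually lies in $\mathbb{C}[G] \otimes Z$ rather than in the larger space of all $Z$-valued functions. Here rationality of $Z$ as a $G$-module enters crucially: every $z \in Z$ generates a finite-dimensional $G$-invariant subspace $V \subset Z$, and the matrix coefficients of the action on $V$ lie in $\mathbb{C}[G]$, making $\phi(z) \in \mathbb{C}[G] \otimes V \subset \mathbb{C}[G] \otimes Z$. I would then verify $G$-invariance with respect to the diagonal action in which $G$ acts on $\mathbb{C}[G]$ by left translation $(k\cdot f)(x)=f(k^{-1}x)$ and on $Z$ by the given action: for $k \in G$, one has $(k \cdot \phi(z))(g) = k \cdot \phi(z)(k^{-1}g) = k \cdot (k^{-1}g \cdot z) = g \cdot z = \phi(z)(g)$.

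Next I would build the inverse $\psi: (\mathbb{C}[G] \otimes Z)^G \to Z$ by evaluation at the identity, $\psi(F) = F(e)$. That $\psi \circ \phi = \mathrm{id}_Z$ is immediate. In the other direction, $G$-invariance of $F$ is equivalent to $F(kg) = k \cdot F(g)$ for all $k,g\in G$, so $\phi(\psi(F))(g) = g \cdot F(e) = F(g)$, giving $\phi \circ \psi = \mathrm{id}$. Both maps respect the algebra structure, since $\phi(z_1 z_2)(g) = g\cdot(z_1 z_2) = (g\cdot z_1)(g\cdot z_2) = \phi(z_1)(g)\phi(z_2)(g)$, using that $G$ acts on $Z$ by algebra automorphisms (implicit in the hypothesis that $Z$ is a $\mathbb{C}$-algebra and a rational $G$-module). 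For $H$-equivariance, where $H$ acts on $\mathbb{C}[G]$ by right translation $(h\cdot f)(x) = f(xh)$, I would compute both sides at $g \in G$: $(h \cdot \phi(z))(g) = \phi(z)(gh) = gh \cdot z$ and $\phi(h \cdot z)(g) = g \cdot (h \cdot z) = gh \cdot z$. Since right translation commutes with left translation, the $H$-action on $(\mathbb{C}[G] \otimes Z)^G$ is well-defined, and $\phi$ intertwines it with the $H$-action on $Z$ inherited from $H \subset G$.

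Finally, for the ``in particular'' clause, taking $H$-invariants on both sides of $Z \cong (\mathbb{C}[G] \otimes Z)^G$ gives $Z^H \cong ((\mathbb{C}[G] \otimes Z)^G)^H$. Since the $G$- and $H$-actions commute and $H$ acts only through the $\mathbb{C}[G]$ factor, one may exchange the orders of invariants and pull the $H$-invariants inside the tensor product to obtain $(\mathbb{C}[G]^H \otimes Z)^G$. The main obstacle in making the argument fully rigorous is justifying this exchange $((\mathbb{C}[G] \otimes Z)^G)^H = ((\mathbb{C}[G] \otimes Z)^H)^G = (\mathbb{C}[G]^H \otimes Z)^G$, which rests on the commutativity of the two actions together with the fact that for a rational $H$-module $M$ and a trivial $H$-module $N$, the invariants $(M\otimes N)^H$ coincide with $M^H \otimes N$; these points reduce to standard rational representation theory in the spirit of \cite{Gro97}.
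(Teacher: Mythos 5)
Your proof is correct and is the standard argument for this result; the paper itself does not prove this theorem but imports it directly from \cite[Theorem 9.1]{Gro97}, so there is no internal proof to compare against. Your construction of $\phi(z)(g)=g\cdot z$ with inverse given by evaluation at the identity, the use of rationality to land in $\mathbb{C}[G]\otimes Z$, and the exchange of the commuting $G$- and $H$-invariants are exactly the textbook route, and all the verifications check out.
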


Let us compare our results with the transfer principle. We recall that
$A^+ _k$ denotes the semigroup of polynomial dominant weights for $\GL_k$.
\begin{prop}
As $(A^+ _n \times A^+ _m)$-graded algebras, we have
$$\mathbb{C}[\mat_{n,m}]^{P'_n \times P'_m} \cong
\left( \mathbb{C}[\GL_m]^{1 \times P'_m} \otimes %
\mathbb{C}[\mat_{n,m}]^{P'_n \times 1}  \right)^{\GL_m} $$
\end{prop}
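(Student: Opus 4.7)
The plan is to apply the transfer principle from \cite[Theorem 9.1]{Gro97} directly. Set $Z = \mathbb{C}[\mat_{n,m}]^{P'_n \times 1}$, regarded as a rational $\GL_m$-algebra through the right action of $\GL_m$ on $\mat_{n,m}$; this right action commutes with the left action of $\GL_n$, and in particular with $P'_n$, so $Z$ is a well-defined $\GL_m$-algebra. Applying the transfer principle with $G = \GL_m$ and $H = P'_m \subset \GL_m$ yields
$$Z^{P'_m} \cong \bigl(\mathbb{C}[\GL_m]^{1 \times P'_m} \otimes Z\bigr)^{\GL_m}.$$
Since the $P'_n$-action and the $P'_m$-action on $\mathbb{C}[\mat_{n,m}]$ commute, $Z^{P'_m}$ equals $\mathbb{C}[\mat_{n,m}]^{P'_n \times P'_m}$, which gives the claimed algebra isomorphism.

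To upgrade this to an isomorphism of $(A^+_n \times A^+_m)$-graded algebras, the next step is to check compatibility of gradings. The $A^+_n$-grading on $\mathbb{C}[\mat_{n,m}]^{P'_n \times P'_m}$ comes from the residual action of $P_n/P'_n$; on the right-hand side this same action lives on the factor $Z$, commutes with $\GL_m$, and so descends to the $\GL_m$-invariants. Naturality of the transfer isomorphism in $Z$ implies that it respects the decomposition into $P_n/P'_n$-eigenspaces. The $A^+_m$-grading on the left-hand side comes from the residual right action of $P_m/P'_m$ on $\mat_{n,m}$; on the right-hand side, it comes from right translation by $P_m/P'_m$ on $\mathbb{C}[\GL_m]^{1 \times P'_m}$, which commutes with left translation by $\GL_m$ and thus descends to the $\GL_m$-invariants. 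Restricting the $P_m$-equivariance of the transfer isomorphism to the $P'_m$-invariants on both sides gives the required $P_m/P'_m$-equivariance.

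The only real subtlety is confirming that the $A^+_m$-grading labels correspond correctly under the isomorphism. This amounts to tracking, through the explicit form $z \mapsto [g \mapsto g \cdot z]$ of the transfer map, how the $P_m/P'_m$-weight of a homogeneous element $z \in Z$ matches the right-translation weight on $\mathbb{C}[\GL_m]^{1 \times P'_m}$. With the conventions fixed as in \S\ref{reciprocity}, this reduces to a direct character computation rather than a genuine obstacle; the bulk of the work is simply setting up the transfer principle with the correct choices of $Z$ and $H$.
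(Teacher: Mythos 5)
Your proof is correct, but it takes a genuinely different route from the paper's. You invoke the transfer principle itself: setting $Z=\mathbb{C}[\mat_{n,m}]^{P'_n\times 1}$ as a rational $\GL_m$-algebra and $H=P'_m$, Grosshans' theorem hands you $Z^{P'_m}\cong(\mathbb{C}[\GL_m]^{1\times P'_m}\otimes Z)^{\GL_m}$ at once, and the grading compatibility follows from the $\GL_m$-equivariance (for right translation) of the transfer isomorphism and its naturality in $Z$ under the residual $P_n/P'_n$-action. The paper instead proves the isomorphism \emph{without} using the transfer principle: it decomposes both sides explicitly, the left via $\GL_n$--$\GL_m$ duality (Lemma \ref{GLm-GLn}) and the right via the Peter--Weyl decomposition $\mathbb{C}[\GL_m]=\sum_\lambda\rho_m^{\lambda^*}\otimes\rho_m^\lambda$, and then matches graded components using the fact that $\bigl(\rho_m^{\lambda^*}\otimes\rho_m^F\bigr)^{\GL_m}$ is one-dimensional exactly when $\lambda=F$ and zero otherwise. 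The difference matters for the purpose of \S\ref{transfer}: the section is framed as \emph{comparing} the explicit multi-reciprocity results with the transfer principle, so the proposition is meant to be verified independently as an illustration that the two agree; your derivation, while shorter and logically sound, makes the proposition a formal corollary of the principle rather than a check of it, and it does not exhibit which weights $(F,\lambda)$ actually contribute. If you keep your route, you should still make the final ``direct character computation'' explicit, since that is precisely where the paper's Schur's-lemma matching of $\lambda$ with $F$ does real work.
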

\begin{proof}
From Lemma \ref{GLm-GLn}, the left hand side decomposes as
$$\sum_{F} (\rho_n ^{F^{*}})^{P'_n} \otimes (\rho_m ^{\lambda})^{P'_m} $$
where the sum runs over all Young diagrams $F$ of length not more than
$min(n,m)$.

For the right hand side, we note that the ring of regular functions
over $GL_m$ decomposes as
$$\mathbb{C}[\GL_m]=\sum_{\lambda}
\rho_m ^{\lambda^{*}} \otimes \rho_m ^{\lambda}$$
over all rational dominant weights $\lambda$ for $\GL_m$ (cf. \cite[Theorem 4.2.7]{GW09}).
By combining this with Lemma \ref{GLm-GLn}, the right hand side
decomposes as
\begin{eqnarray*}
&& \sum_{\lambda, F}
\left(\rho_m ^{\lambda^{*}} \otimes (\rho_m ^{\lambda})^{P'_m} \otimes
(\rho_n ^{F^{*}})^{P'_n} \otimes \rho_m^{F} \right)^{\GL_m} \\
&=& \sum_{\lambda, F}
(\rho_n ^{F^{*}})^{P'_n} \otimes (\rho_m ^{\lambda})^{P'_m}
\otimes \left(\rho_m ^{\lambda^{*}} \otimes \rho_m ^{F} \right)^{\GL_m}
\end{eqnarray*}
Since the dimension of the invariant space $\left(\rho_m ^{\lambda^{*}} \otimes \rho_m ^{F}
\right)^{\GL_m}$ is not more than $1$ and it is exactly $1$
when $\lambda=F$. This shows that two graded algebras are isomorphic.
\end{proof}

For a fence quiver $Q$ with dimension vector $d$, as given in \S \ref{parabolic}
and \S \ref{reciprocity}, by iterating the above proposition, it is easy to see that

\begin{coro}
As $(A_{H}^{+} \times A_{T}^{+})$-graded algebras, we have
$$ \mathbb{C}[Rep(Q,\mathbf{d})]^{P'_H \times P'_T} \cong
\left( \mathbb{C}[G_{T}]^{1 \times {P'_{T}}} \otimes
\mathbb{C}[Rep(Q,\mathbf{d})]^{P'_H \times 1} \right)^{G_T}
$$
where $A_{H}^{+}=\prod_{h}{A^+_{d_h}}$ and $A_{T}^{+}=\prod_{t}{A^+_{d_t}}$.
\end{coro}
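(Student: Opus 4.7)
The plan is to deduce this corollary as a direct application of the transfer principle stated at the beginning of this section, with the preceding proposition providing the matrix-case instance. Take $G = G_T = \prod_{t \in T} \GL_{d_t}$, let $H = P'_T \subset G_T$, and set $Z = \mathbb{C}[\Rep(Q,\bd)]^{P'_H \times 1}$. First I would verify that $Z$ is a rational $G_T$-module and a $\mathbb{C}$-algebra: since $G_H$ and $G_T$ act on opposite sides of each matrix block $\mat_{d_{h(a)},d_{t(a)}}$ in the decomposition $\Rep(Q,\bd) = \bigoplus_{a \in Q_1} \mat_{d_{h(a)},d_{t(a)}}$, the two actions commute, so the $P'_H \times 1$-invariants form a $G_T$-stable subalgebra. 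Applying the transfer principle to the triple $(G_T, P'_T, Z)$ then yields
$$Z^{P'_T} \cong \bigl(\mathbb{C}[G_T]^{1 \times P'_T} \otimes Z\bigr)^{G_T},$$
which is exactly the claimed isomorphism of algebras.

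Alternatively, and this is the iteration reading suggested by the hint, one can use the factorization $\mathbb{C}[\Rep(Q,\bd)] = \bigotimes_{t \in T} \mathbb{C}[\mat_{n_t,d_t}]$ from (\ref{REP-Ring1}) and apply the preceding proposition to each tail one at a time. For a fixed $t_0 \in T$, the subgroup $1 \times P'_{d_{t_0}}$ acts only on the $t_0$-th tensor factor, so the matrix-case proposition (taking the left parabolic to be trivial) transfers this single $P'_{d_{t_0}}$-action to the separate algebra $\mathbb{C}[\GL_{d_{t_0}}]^{P'_{d_{t_0}}}$. Because the various $P'_{d_t}$'s commute and act on distinct tensor factors, these transfers assemble over all $t \in T$ to produce the same final isomorphism; taking $P'_H \times 1$-invariants afterwards is harmless, since $P'_H$ commutes with every $P'_{d_t}$ and with the $\GL_{d_t}$-actions introduced on the transfer copies of $\mathbb{C}[\GL_{d_t}]$.

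The main step requiring care is not the underlying algebra isomorphism but its compatibility with the $(A_H^+ \times A_T^+)$-grading. On the left-hand side this grading arises from the actions of $P_H/P'_H$ and $P_T/P'_T$ on the respective invariants; on the right-hand side the $A_T^+$-grading is induced by right-translation of $P_T$ on $\mathbb{C}[G_T]^{1 \times P'_T}$, which descends to $P_T/P'_T$ because $P'_T$ is normal in $P_T$, while the $A_H^+$-grading is inherited from the $P_H/P'_H$-action on $\mathbb{C}[\Rep(Q,\bd)]^{P'_H \times 1}$ and survives the $G_T$-invariants because $G_T$ commutes with $P_H$. Since the transfer isomorphism is ultimately built from the Peter--Weyl decomposition, which is weight-preserving under all the commuting torus actions in sight, it matches weight spaces on the two sides. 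This bookkeeping of gradings is the only nontrivial aspect of the argument; the underlying algebraic identity is a formality once the transfer principle is in hand.
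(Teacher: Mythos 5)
Your proposal is correct, and it actually contains the paper's argument as its second reading: the paper's entire ``proof'' of this corollary is the phrase ``by iterating the above proposition,'' i.e.\ apply the matrix-case proposition factor by factor across the decomposition $\mathbb{C}[\Rep(Q,\bd)]=\bigotimes_{t\in T}\mathbb{C}[\mat_{n_t,d_t}]$, exactly as you describe. Your first route --- invoking Grosshans's transfer principle once, with $G=G_T$, $H=P'_T$, and $Z=\mathbb{C}[\Rep(Q,\bd)]^{P'_H\times 1}$ --- is a legitimate and arguably cleaner alternative: it bypasses the tensor-factor bookkeeping entirely, at the cost of having to check (as you do) that $Z$ is a rational $G_T$-algebra, which is immediate since $P'_H$ and $G_T$ act on opposite sides of each matrix block. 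Your discussion of why the isomorphism respects the $(A_H^+\times A_T^+)$-grading is also more than the paper offers; the key point, which you state correctly, is that the transfer isomorphism is equivariant for the full right-translation action of $P_T$ (not just $P'_T$), so the $P_T/P'_T$-weight spaces on the two sides correspond.
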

\end{say}

\bigskip

\end{document}